\DeclareSymbolFont{cyrletters}{OT2}{wncyr}{m}{n}
\DeclareMathSymbol{\Sha}{\mathalpha}{cyrletters}{"58}
\newcommand{\defi}[1]{\textsf{#1}} 
\newcommand{\Aff}{\mathbb{A}}
\newcommand{\PP}{\mathbb{P}}
\newcommand{\Z}{\mathbb{Z}}
\newcommand{\Ahat}{{\widehat{A}}}
\newcommand{\Rhat}{{\widehat{R}}}
\newcommand{\ii}{\mathbf{i}}
\newcommand{\pp}{\mathfrak{p}}
\newcommand{\mm}{\mathfrak{m}}
\newcommand{\calH}{\mathcal{H}}
\newcommand{\OO}{\mathscr{O}}
\newcommand{\OOhat}{\widehat{\OO}}
\DeclareMathOperator{\Char}{char}
\DeclareMathOperator{\Det}{Det}
\DeclareMathOperator{\Frac}{Frac}
\DeclareMathOperator{\Gr}{Gr}
\DeclareMathOperator{\mult}{mult}
\DeclareMathOperator{\Proj}{Proj}
\DeclareMathOperator{\rank}{rank}
\DeclareMathOperator{\Res}{Res}
\DeclareMathOperator{\Span}{Span}
\DeclareMathOperator{\Spec}{Spec}
\DeclareMathOperator{\vmin}{vmin}
\newcommand{\red}{{\operatorname{red}}}
\newcommand{\sing}{{\operatorname{sing}}}
\newcommand{\smooth}{{\operatorname{smooth}}}
\newcommand{\M}{\operatorname{M}}
\newcommand{\del}{\partial}
\newcommand{\directsum}{\oplus} 
\newcommand{\injects}{\hookrightarrow}
\newcommand{\intersect}{\cap} 
\newcommand{\isom}{\simeq}
\newcommand{\surjects}{\twoheadrightarrow}
\newcommand{\union}{\cup} 
\newcommand{\Dfinite}{D_{\textup{finite}}}
\newcommand{\DfiniteA}{D_{\textup{finite},A}}
\newcommand{\DfiniteR}{D_{\textup{finite},R}}
\newcommand{\negspace}{\mathchoice{\hspace*{-1.5pt}}{\hspace*{-1.5pt}}{\hspace*{-1.1pt}}{\hspace*{-0.8pt}}}
\newcommand{\pws}[1]{[\negspace[#1]\negspace]} 
\newcommand{\ideal}[1]{\langle #1 \rangle} 
\numberwithin{equation}{section}
\newtheorem{theorem}[equation]{Theorem}
\newtheorem*{theorem1.1}{Theorem~1.1}
\newtheorem{lemma}[equation]{Lemma}
\newtheorem{corollary}[equation]{Corollary}
\newtheorem{proposition}[equation]{Proposition}
\theoremstyle{definition}
\newtheorem{definition}[equation]{Definition}
\theoremstyle{remark}
\newtheorem{remark}[equation]{Remark}
\numberwithin{equation}{section}
\g@addto@macro\bfseries{\boldmath} 
\definecolor{darkgreen}{rgb}{0,0.5,0}
\begin{document}

\title{The valuation of the discriminant of a hypersurface}
\subjclass[2020]{Primary 14J17; Secondary 11G25, 14B05, 14G20}
\keywords{Discriminant, hypersurface, singularity, nondegenerate double point, ordinary double point}
\author{Bjorn Poonen}
\address{Department of Mathematics,
         Massachusetts Institute of Technology,
         Cambridge, MA 02139-4307, USA}
\email{poonen@math.mit.edu}
\urladdr{\url{http://math.mit.edu/~poonen/}}

\author{Michael Stoll}
\address{Mathematisches Institut,
         Universit\"at Bayreuth,
         95440 Bayreuth, Germany}
\email{Michael.Stoll@uni-bayreuth.de}
\urladdr{\url{http://www.mathe2.uni-bayreuth.de/stoll/}}

\thanks{B.P.\ was supported in part by National Science Foundation grants DMS-1601946 and DMS-2101040 and Simons Foundation grants \#402472 (to Bjorn Poonen) and \#550033.}
\date{October 16, 2025}

\begin{abstract}
Let $R$ be a discrete valuation ring,
with valuation $v \colon R \surjects \Z_{\ge 0} \union \{\infty\}$
and residue field $k$.
Let $H$ be a hypersurface $\Proj(R[x_0,\ldots,x_n]/\ideal{f})$.
Let $H_k$ be the special fiber,
and let $(H_k)_{\sing}$ be its singular subscheme.
Let $\Delta(f)$ be the discriminant of $f$.
We use Zariski's main theorem and degeneration arguments
to prove that $v(\Delta(f))=1$
if and only if $H$ is regular
and $(H_k)_{\sing}$ consists of a nondegenerate double point over $k$.
We also give lower bounds on $v(\Delta(f))$
when $H_k$ has multiple singularities or a positive-dimensional singularity.
\end{abstract}

\maketitle


\section{Introduction}
\label{S:introduction}

Throughout this paper, $R$ denotes a discrete valuation ring,
with valuation $v \colon R \surjects \Z_{\ge 0} \union \{\infty\}$,
maximal ideal $\mm = (\pi)$,
and residue field $k$
(except in a few places where $k$ denotes an arbitrary field).

Let $E \subset \PP^2_R$ be defined by a Weierstrass equation,
with generic fiber an elliptic curve.
If the discriminant of the equation has valuation~$1$,
then $E$ is regular and the singular locus of its special fiber
consists of a node; this follows from Tate's algorithm \cite{Tate1975},
for example; see also \cite{SilvermanATAEC}*{Lemma~IV.9.5(a)}.
Our first theorem (Theorem~\ref{T:valuation 1})
generalizes this to hypersurfaces of arbitrary degree and dimension
(terminology will be explained later):

\begin{theorem}
\label{T:valuation 1}
Let $f \in R[x_0,\dots,x_n]$ be a homogeneous polynomial.
Let $\Delta(f)$ be its discriminant.
Let $H = \Proj(R[x_0,\dots,x_n]/\ideal{f})$.
Then the following are equivalent:
\begin{enumerate}[\upshape (i)]
\item $v(\Delta(f))=1$;
\item $H$ is regular,
and $(H_k)_{\sing}$ consists of a single nondegenerate double point in $H(k)$.
\end{enumerate}
\end{theorem}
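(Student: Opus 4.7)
The plan is to view $v(\Delta(f))$ as an intersection multiplicity on the parameter space $V$ of homogeneous degree-$d$ forms in $n+1$ variables over $R$. Writing $\Sigma = \{\Delta = 0\} \subset V$ for the discriminant hypersurface, $f$ defines an arc $\varphi \colon \Spec R \to V$ whose closed point is $f_0 := f \bmod \pi$, and $v(\Delta(f))$ equals the order of vanishing of $\Delta$ along $\varphi$. The key auxiliary object is the incidence scheme $I \subset V \times \PP^n$ cut out by the Jacobian ideal $(\partial_0 f, \ldots, \partial_n f)$; its first projection $p_1 \colon I \to V$ is proper with image $\Sigma_{\red}$, and its fiber over any $g \in V$ is the singular subscheme of $\Proj(g)$. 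For direction (ii)$\Rightarrow$(i), if $p$ is a nondegenerate double point of $\Proj(f_0)$ then invertibility of the Hessian makes $I$ smooth at $(f_0, p)$ of dimension $\dim V - 1$; assuming also that $p$ is the unique singularity, the fiber $p_1^{-1}(f_0) = \{(f_0, p)\}$ is a single point, so $p_1$ is quasi-finite at $(f_0, p)$. Zariski's main theorem produces a neighborhood on which $p_1$ is finite, and combined with smoothness of $I$ and the birationality of $p_1$ onto $\Sigma$, one concludes that $p_1$ is a local isomorphism onto $\Sigma$ at $(f_0, p)$; in particular $\Sigma$ is smooth at $f_0$ with tangent hyperplane $\{h \in V : h(p) = 0\}$, and $\Delta$ is locally the reduced equation of $\Sigma$. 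Then $v(\Delta(f))$ is the order of tangency of $\varphi$ with this hyperplane; expanding $f = f_0 + \pi f_1 + O(\pi^2)$, this order equals $v(f_1(p))$, which is $1$ exactly when $f \notin \mm^2$ in $\OO_{\PP^n_R, p}$, i.e., when $H$ is regular at $p$.

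For (i)$\Rightarrow$(ii), I would prove the contrapositive. The failures of (ii) split into three cases: (a) $(H_k)_{\sing}$ is reducible or positive-dimensional; (b) $(H_k)_{\sing}$ is a single point $p$ that is not a nondegenerate double point; (c) $(H_k)_{\sing} = \{p\}$ is a nondegenerate double point but $H$ is not regular at $p$. Case (c) follows immediately from the analysis above, since $\varphi$ is then tangent to the smooth hypersurface $\Sigma$. Cases (a) and (b) reduce to showing that $\Sigma$ has multiplicity $\geq 2$ at $f_0$: in (b), $I$ is itself singular at $(f_0, p)$ because the Jacobian equations are no longer transverse there (the Hessian is degenerate, or $f_0$ has multiplicity $\geq 3$ at $p$), and pushing this through $p_1$ with ZMT forces $\Sigma$ to have multiplicity $\geq 2$ at $f_0$; in (a), I would degenerate $f_0$ within $V$ so that the bad locus splits into several nondegenerate double points, each contributing a distinct smooth local branch of $\Sigma$ through $f_0$, and additivity of intersection multiplicities along these branches then yields $v(\Delta(f)) \geq 2$.

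The main obstacle is to carry out this local-to-global picture in a characteristic-free manner. The Hessian-based smoothness of $I$, the identification of $\Delta$ with the reduced equation of $\Sigma$, and the factorization of $v(\Delta(f))$ into contributions at distinct singular points all become delicate in residue characteristic $2$, where ``nondegenerate quadratic form'' needs a characteristic-free formulation and the Hessian must be replaced by something subtler. This is precisely where the degeneration arguments advertised in the abstract do the heavy lifting: they let us compute local contributions on a controlled family, where several elementary singularities appear, and then transport the answer back to the given $f$, with Zariski's main theorem ensuring that the passage from $I$ to $\Sigma$ behaves properly throughout.
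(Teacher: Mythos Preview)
Your overall architecture—the incidence scheme $I=\calH_{\sing}$, the projection $p_1=\varphi$ to the discriminant locus $\Sigma=D$, and Zariski's main theorem—matches the paper's. Your (ii)$\Rightarrow$(i) is essentially the paper's argument: the paper phrases it as ``$(H_k)_{\sing}$ a single reduced point $\Rightarrow f_0\in D_1 \Rightarrow \varphi$ is a local isomorphism over $D_1 \Rightarrow H_{\sing}\isom\Spec(R/\ideal{\Delta(f)})$,'' and then computes $H_{\sing}\isom\Spec k$ exactly by your regularity criterion $f\notin\mm_Q^2$.

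There is a genuine error in your case~(b). The scheme $I$ is \emph{always} smooth: the first projection $I\to\PP^n$ exhibits $I$ as a vector bundle of rank $N-n-1$ (the defining equations $\partial_iF=0$ are linear and independent in the coefficients $a_\ii$), so $I$ is smooth over the base regardless of the Hessian at any particular $(f_0,p)$. What actually happens when $p$ is not a nondegenerate double point is that the \emph{fiber} $p_1^{-1}(f_0)=(H_{f_0})_{\sing}$ is non-reduced at $p$. From this you can still conclude that $\Sigma$ is singular at $f_0$ (a finite birational map from a normal source is the normalization, and if the target were normal at $f_0$ the map would be a local isomorphism there, forcing the fiber to be reduced). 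But ``$\Sigma_k$ singular at $f_0$'' alone does not give $v(\Delta(f))\ge 2$: in the Taylor expansion $\Delta(f)=\Delta(\tilde f_0)+\pi\,\nabla\Delta(\tilde f_0)\cdot g+O(\pi^2)$, the constant term $\Delta(\tilde f_0)$ could have valuation exactly~$1$. The paper closes this gap by also showing that every point of $D(k)$ lifts to $D(R/\mm^2)$ (trivially, by lifting while keeping the singularity at $p$), and combining the two conditions (Proposition~\ref{P:Vk2}).

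More to the point, the paper avoids your three-case contrapositive entirely. It argues directly: if $v(\Delta(f))=1$, then $\Delta$ together with $a_\ii-\alpha_\ii$ form a system of parameters for $\Aff^N_R$ at the closed point~$P$, so $D_R=\{\Delta=0\}$ is \emph{regular} (hence normal) at~$P$; since $\varphi^{-1}(\DfiniteR)\to\DfiniteR$ is the normalization, $\varphi$ is a local isomorphism at~$P$, forcing $(H_k)_{\sing}=\varphi^{-1}(P)$ to be a single reduced $k$-point, i.e., a nondegenerate double point. This subsumes your cases (a) and~(b) in one stroke and never needs to argue that $\Sigma$ is singular.

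Finally, the characteristic~$2$ obstacle you flag is disposed of by a separate case: when $\Char k=2$ and $n$ is odd, $\Delta=A^2+4B$ for suitable $A,B$ (Saito), so $v(\Delta(f))$ is never~$1$, and independently a nondegenerate double point cannot exist on an odd-dimensional hypersurface in characteristic~$2$. Thus both (i) and (ii) fail, and there is nothing to prove. In the remaining cases $\varphi$ really is birational and your Hessian-free worries do not arise.
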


For hypersurfaces with more than one singularity, we have the following:

\begin{theorem}
  \label{T:2}
  Let $f$ and $H$ be as in Theorem~\ref{T:valuation 1}.  
  \begin{enumerate}[\upshape (a)]
  \item \label{I:2a} If $(H_k)_{\sing}$ consists of $r$ closed points, $v(\Delta(f)) \ge r$ $($Theorem~\ref{T:r isolated singularities}$)$.
  \item \label{I:2b}
    We have $v(\Delta(f)) \ge \dim (H_k)_{\sing} + 1$ $($Theorem~\ref{T:positive-dimensional singularity}\eqref{I:positive-dimensional singularity,a}$)$.
  \item \label{I:2c}
    If $\dim (H_k)_{\sing} \ge 1$, then $v(\Delta(f)) \ge \lfloor (\deg f-1)/2 \rfloor$ $($Theorem~\ref{T:positive-dimensional singularity}\eqref{I:positive-dimensional singularity,b}$)$.
  \end{enumerate}
\end{theorem}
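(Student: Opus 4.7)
My plan is to interpret $v(\Delta(f))$ as the intersection multiplicity at $f$ of the morphism $\Spec R\to V_{n,d}$ with the universal discriminant hypersurface $\calD\subset V_{n,d}$, where $V_{n,d}$ is the affine space of coefficients of degree-$d$ forms in $x_0,\dots,x_n$. Each of the three bounds then becomes a statement about the local multiplicity of $\calD$ at $f$, to be extracted from the geometry of the singular-point incidence variety $\Sigma=\{(F,p)\in V_{n,d}\times\PP^n:p\in V(F)_{\sing}\}$ and ultimately reduced to Theorem~\ref{T:valuation 1}.

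For (a), I would first base-change along a ramified finite extension $R'/R$, which scales both sides of the inequality by the ramification index, to reduce to the case that the $r$ singular points $p_1,\dots,p_r$ of $H_k$ are $k$-rational, and then induct on $r$ with Theorem~\ref{T:valuation 1} as the base case. For the inductive step, pick $g\in R[x_0,\dots,x_n]$ of degree $d$ whose reduction vanishes at $p_1,\dots,p_{r-1}$ but not at $p_r$; along the family $f_t=f+tg$ over $R[t]$, specializing $t$ to a power of the uniformizer destroys the singularity at $p_r$ while preserving the others, and a resultant-style factorization of $\Delta(f_t)$ peels off a contribution of at least~$1$ from $p_r$. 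Geometrically this encodes the fact that the projection $\Sigma\to\calD$ is generically one-to-one, so $r$ preimages of $f$ force multiplicity at least~$r$.

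For (b), write $\delta:=\dim(H_k)_{\sing}$. The fiber $\Sigma_f$ has dimension $\delta$, so an excess-intersection argument on $\Spec R\times_{V_{n,d}}\Sigma$ forces the local multiplicity of $\calD$ at $f$ to be at least $\delta+1$. Concretely, I would exhibit, via a generic $(\delta+1)$-parameter $R$-deformation of $f$ inside $V_{n,d}$, a one-parameter subfamily whose generic member has at least $\delta+1$ isolated nondegenerate double points, apply part (a) to that generic member, and transfer the bound back to $f$ by semicontinuity of $v(\Delta(\cdot))$ in families.

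For (c), I would reduce to $n=2$ by cutting $H$ with $n-2$ generic hyperplanes defined over $R$: since $(H_k)_{\sing}$ has dimension $\geq 1$, the restriction of $f_k$ to a generic $2$-plane $P$ must acquire a repeated factor $f_k|_P=g^2h$ with $e:=\deg g\geq 1$. A direct computation with the plane-curve discriminant, written as the resultant of the partial derivatives of $f$, shows that such a squared factor forces a lower bound on $v(\Delta(f))$ growing roughly like $e(d-e)$, and optimizing over $e\in\{1,\dots,\lfloor d/2\rfloor\}$ recovers the claimed $\lfloor(d-1)/2\rfloor$ bound. The principal obstacle throughout is implementing the deformation and genericity arguments rigorously over the DVR $R$: analytic ``small perturbations'' must be replaced by explicit $R$-integral test polynomials, and the specialization behavior of $\Delta$ must be tracked carefully at each step, most delicately when the repeated-factor structure in (c) has to survive both the hyperplane cut and the passage from $k$ back to $R$.
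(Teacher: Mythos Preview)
Your overall strategy---interpreting $v(\Delta(f))$ via the incidence variety $\Sigma$ and then running degeneration/semicontinuity arguments---is close in spirit to the paper, but several key steps are either wrong or missing.

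For (a), two problems. First, a \emph{ramified} extension does not scale both sides by the ramification index: $v(\Delta(f))$ scales by $e$, but the number of closed points of $(H_k)_{\sing}$ is governed by residue degrees, not by $e$. The paper instead passes to a \emph{weakly unramified} extension with algebraically closed residue field. Second, and more seriously, your inductive step is a placeholder. Specializing $t$ to a power of the uniformizer in $f_t=f+tg$ does not change the special fiber at all, so no singularity is ``destroyed''; and the phrase ``a resultant-style factorization of $\Delta(f_t)$ peels off a contribution of at least~$1$'' is exactly the content of the theorem you are trying to prove. The paper's mechanism is different and substantive: over $\DfiniteR$ the map $\varphi\colon\calH_{\sing}\to D$ is the normalization, so the $r$ singular points are $r$ maximal ideals in the integral closure of $\OO_{D_R,P}$; a commutative-algebra lemma then forces the completion $\OOhat_{\Aff^N_R,P}/\ideal{\Delta}$ to have at least $r$ minimal primes, hence $\Delta$ has at least $r$ (not necessarily distinct) prime factors in the completed local ring, each vanishing at $P$.

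For (c), the reduction to $n=2$ is a genuine gap: there is no comparison between $\Delta_{n,d}(f)$ and $\Delta_{2,d}(f|_P)$ for a generic $2$-plane $P$; these are different polynomials in different coefficient spaces, and slicing $H$ by hyperplanes does not control the discriminant of $H$ itself. The paper stays in $\PP^n$. It picks $r=\lfloor(d-1)/2\rfloor$ points $P_1,\dots,P_r$ on $(H_k)_{\sing}$, shows by a dimension count that one can choose $h\in k[x_0,\dots,x_n]_d$ with $(H_h)_{\sing}$ finite and equal to $\{P_1,\dots,P_r\}$, and then moves along the \emph{line in $\Aff^N_k$} joining $\bar f$ and $h$: every point on that line has all $P_i$ singular, a generic point has finite singular locus, part (a) gives $\vmin_\Delta\ge r$ there, and upper-semicontinuity of $\vmin_\Delta$ (proved via restriction of scalars or the Greenberg functor) transports the bound to $\bar f$. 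Your proposal for (b) is actually closer to this template than to the paper's own argument for (b), which is more direct: choosing $r=\dim(H_k)_{\sing}+1$ singular points in linearly general position, one checks that $\Delta$ lies in the product of the $r$ linear ideals ``singular at $P_i$'', since for $d\ge3$ these ideals involve disjoint sets of coefficient variables.
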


To prove~\eqref{I:2c}, we show that $H_k$ is a limit
of hypersurfaces whose singular subscheme is finite but with many points,
and we combine this and an argument using restriction of scalars
in the equal characteristic case and the Greenberg functor in the mixed
characteristic case.

The paper is organized as follows.
Section~\ref{S:discriminant} defines the discriminant $\Delta$ of a projective hypersurface $f=0$ and proves some basic properties of it.
Section~\ref{S:quadratic forms} describes quadratic forms over a discrete valuation ring, and computes their discriminants.
Section~\ref{S:nondegenerate double points} defines nondegenerate and ordinary double points.
Section~\ref{S:general singular locus} adapts the proof of the Bertini smoothness theorem to prove that the singular locus of the general singular hypersurface over a field consists of a single ordinary double point.
Our proofs require some ingredients from commutative algebra, provided in Section~\ref{S:CA}.
Section~\ref{S:several} proves Theorem~\ref{T:2}\eqref{I:2a} = Theorem~\ref{T:r isolated singularities}.
Section~\ref{S:valuations} analyzes the minimum valuation of values of a multivariable polynomial on a residue disk,
and Section~\ref{S:vminDelta} applies this analysis to $\Delta$, viewed as a polynomial in the coefficients of $f$.
Finally, Section~\ref{S:valuation 1} proves Theorem~\ref{T:valuation 1},
and Section~\ref{S:positive-dimensional} proves the rest of Theorem~\ref{T:2}.
  
\section{The discriminant}
\label{S:discriminant}

Fix $n \ge 1$ and $d \ge 2$.
Let $A$ be a ring.
Let $A[x_0,\ldots,x_n]_d$ be the set of homogeneous polynomials of degree $d$.
Let $f \in A[x_0,\ldots,x_n]_d$.
Let $H=H_f = \Proj(k[x_0,\ldots,x_n]/\ideal{f}) \subset \PP^n_A$.
Define the \defi{relative singular subscheme} $H_{\sing}$
as the closed subscheme of $\PP^n_A$ defined by $f=\del f/\del x_0 = \cdots = \del f/\del x_n = 0$.
Its complement $H_{\smooth} \colonequals H - H_{\sing}$ is the locus of points at which $H \to \Spec A$ is smooth of relative dimension $n-1$.

Let $x^{\ii}$ range over the $N \colonequals \binom{n+d}{n}$
monomials in $\Z[x_0,\ldots,x_n]_d$,
and let $a_{\ii}$ be independent indeterminates in $\Z[\{ a_{\ii} \}]$,
so $F \colonequals \sum_{\ii} a_{\ii} x^{\ii}$
is the generic degree~$d$ homogeneous polynomial in $x_0,\ldots,x_n$.
Then the affine space $\Aff^N \colonequals \Spec \Z[\{ a_{\ii} \}]$
may be viewed as a moduli space for hypersurfaces
(one could also remove the origin,
or projectivize as in \cite{Saito2012}*{\S2.4}).
Specializing the previous paragraph to $f=F$ and $A=\Z[\{ a_{\ii} \}]$
gives the \defi{universal hypersurface} $\calH \subset \PP^n \times \Aff^N$
and its relative singular subscheme $\calH_{\sing}$,
relative to the second projection $\phi \colon \calH \to \Aff^N$.

The first projection makes $\calH_{\sing} \to \PP^n$
a rank $N-n-1$ vector bundle since the equations
$F=\del F/\del x_0 = \cdots = \del F/\del x_n = 0$
are linear in the $a_{\ii}$
and independent above each point of $\PP^n$
except for the Euler relation $d \cdot F = \sum x_i (\del F/\del x_i)$.
Thus $\calH_{\sing}$ is integral
and smooth of relative dimension $N-1$ over $\Z$.
Since $\phi$ is proper,
the image $D \colonequals \phi(\calH_{\sing}) \subset \Aff^N$
is a closed integral subscheme;
$D$ is the locus parametrizing \emph{singular} hypersurfaces.
In fact, $D \subset \Aff^N$ is a divisor
and the restriction $\calH_{\sing} \to D$ of $\phi$ is birational
(see Proposition~\ref{P:birational} below);
this is a Bertini-type statement saying essentially
that among hypersurfaces singular at a point,
most have singular subscheme consisting of just that point.
Thus $D \subset \Aff^N$ is the zero locus of some polynomial
$\Delta \in \Z[\{a_{\ii}\}]$
determined up to a unit, i.e., up to sign;
$\Delta$ is called the \defi{discriminant}.
(See \cites{Gelfand-Kapranov-Zelevinsky2008,Demazure2012,Saito2012}
for other descriptions of $\Delta$.)
By definition, if the $a_{\ii}$ are specialized to elements
of a field $k$, the resulting hypersurface in $\PP^n_k$
is singular (not smooth of dimension $n-1$)
if and only if $\Delta$ specializes to~$0$ in~$k$.
It is a classical fact that the polynomial $\Delta$ is homogeneous of degree
$(n+1) (d-1)^n$ in the $N$ variables \cite{Eisenbud-Harris2016}*{Proposition~7.4}.

\section{Quadratic forms}
\label{S:quadratic forms}

\begin{proposition}
\label{P:discriminant of quadratic form}
Suppose that $d=2$.
Let $\Det = \det (\del^2 F/\del x_i \del x_j ) \in \Z[\{a_\ii\}]$.
If $n$ is odd, then $\Delta = \pm \Det$.
If $n$ is even, then $\Delta = \pm \Det/2$.
\end{proposition}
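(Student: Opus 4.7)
The plan is to show that $\Det$ and $\Delta$ cut out the same reduced subscheme of $\Aff^N_\Q$, so that $\Det = c\Delta$ for some $c \in \Q^*$, and then to pin down $|c|$ by computing the $\Z$-content of $\Det$.

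First I would check that both sides have degree $n+1$: the Hessian is an $(n+1) \times (n+1)$ matrix whose entries are linear in the $a_\ii$ (diagonal $2a_{ii}$, off-diagonal $a_{ij}$), so $\Det$ is homogeneous of degree $n+1$, matching the formula $(n+1)(d-1)^n = n+1$ for $\Delta$. Over any field of characteristic $\ne 2$, Euler's identity $2F = \sum x_i \,\partial F/\partial x_i$ makes the equation $F = 0$ redundant, so $\calH_{\sing}$ is cut out already by $\partial F/\partial x_0 = \cdots = \partial F/\partial x_n = 0$, a linear system in the $x_i$ whose coefficient matrix is exactly the Hessian. Hence a quadric over such a field is singular iff $\Det$ vanishes, and the integrality of $D$ (which makes $\Delta$ irreducible in $\Z[\{a_\ii\}]$) forces $\Det = c\Delta$ over $\Q$. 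Since $D$ dominates $\Spec \Z$, $\Delta$ can be taken primitive; clearing denominators then shows $c \in \Z$ and $|c| = \cont(\Det)$.

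The core step is the content calculation. Expanding $\Det = \sum_\sigma \operatorname{sgn}(\sigma) \prod_i M_{i,\sigma(i)}$, each fixed point of $\sigma$ contributes a factor of $2$ via $M_{ii} = 2a_{ii}$. The key observation is that any contributor $\sigma$ to a given monomial $\mu$ has fixed-point set exactly $\{i : a_{ii} \mid \mu\}$, so all contributors to $\mu$ share the same fixed points. If $n$ is odd, the involution $\sigma_0 = (01)(23)\cdots(n{-}1,n)$ is the unique contributor to $a_{01}^2 a_{23}^2 \cdots a_{n-1,n}^2$, with coefficient $\operatorname{sgn}(\sigma_0) = \pm 1$, so $\cont(\Det) = 1$ and $\Det = \pm\Delta$. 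If $n$ is even, any contributor to a monomial either has a fixed point (supplying a factor of $2$ from the diagonal) or is fixed-point-free on the odd-sized set $\{0,\ldots,n\}$ and so must contain a cycle of length $\ge 3$; in the latter case $\sigma \ne \sigma^{-1}$, and the pair $\{\sigma, \sigma^{-1}\}$ contributes the same monomial with the same sign, again doubling the coefficient. Every coefficient of $\Det$ is therefore even, and the involution $(01)(23)\cdots(n{-}2,n{-}1)(n)$ produces the monomial $a_{01}^2 \cdots a_{n-2,n-1}^2 a_{nn}$ with coefficient exactly $\pm 2$, giving $\cont(\Det) = 2$ and $\Delta = \pm \Det/2$.

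The main obstacle is this even-$n$ content calculation: one has to rule out the possibility that distinct permutations contributing the same monomial might combine to an odd coefficient. The argument rests on the observation that the fixed-point set of a contributor is determined by the monomial, so the ``factor of $2$ from a diagonal entry'' case and the ``factor of $2$ from the $\sigma$-$\sigma^{-1}$ pairing'' case apply uniformly across all contributors to any single monomial rather than potentially cancelling with each other.
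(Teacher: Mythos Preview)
Your argument is correct. The first step---that $\Det$ and $\Delta$ differ by a rational constant---is the same as what the paper dismisses as ``well known,'' and your justification (Euler's identity plus irreducibility of $\Delta$ plus a degree count) is exactly the standard one.

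Where you diverge is in pinning down the constant. The paper does this by \emph{evaluation}: it picks a specific quadric over $\Z$ that is smooth over every residue field (namely $x_0x_1+\cdots+x_{n-1}x_n$ for $n$ odd, and $x_0x_1+\cdots+x_{n-2}x_{n-1}+x_n^2$ for $n$ even), so that $\Delta=\pm1$ there by definition, and then simply computes $\Det$ on that form, which is $\pm1$ or $\pm2$ respectively. You instead compute the $\Z$-content of $\Det$ directly from the Leibniz expansion, using the observation that the fixed-point set of a contributing permutation is determined by the monomial, together with the $\sigma\leftrightarrow\sigma^{-1}$ pairing on fixed-point-free permutations of an odd set. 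Both arguments are valid; the paper's is shorter because evaluating a block-diagonal determinant is immediate, while yours has the virtue of explaining structurally \emph{why} every coefficient is even when $n$ is even. Amusingly, the witness permutations you single out (the perfect matching, and the matching with one fixed point) correspond precisely to the specific quadrics the paper evaluates at, so the two computations are really the same computation viewed from opposite ends.
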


\begin{proof}
This is well known, except perhaps the power of $2$,
which can be determined by evaluating $\Det$
for a quadratic form defining a smooth quadric over $\Z$,
since $\Delta = \pm 1$ for such a form.
Use $x_0 x_1 + \cdots + x_{n-1} x_n$ if $n$ is odd,
and $x_0 x_1 + \cdots + x_{n-2} x_{n-1} + x_n^2$ if $n$ is even.
\end{proof}

Let $R$ be a discrete valuation ring or field.
A \defi{quadratic space} over $R$
is a pair $(M,q)$
where $M$ is a finite-rank free $R$-module (since $R$ is a PID, we do not to say the word \emph{projective}),
and $q \colon M \to R$ is such that if $e_1,\ldots,e_n$ is a basis of $M$,
then $q(x_1 e_1 + \cdots + x_n e_n)$ is given by a polynomial in $R[x_1,\ldots,x_n]_2$.
A \defi{symmetric bilinear space} over $R$
is a pair $(M,\beta)$
where $M$ is as before
and $\beta \colon M \times M \to R$ is a symmetric $R$-bilinear pairing.
Given $q$, define $\beta(x,y) \colonequals q(x+y)-q(x)-q(y)$;
in this way, every quadratic space has an associated symmetric bilinear space.

\begin{proposition}
\label{P:general quadratic form}
Let $k$ be an algebraically closed field.
A general quadratic form $q \in k[x_1,\ldots,x_n]_2$ is equivalent $($via a linear change of variables$)$ to
\[
	\begin{cases}
	x_1^2+\cdots+x_n^2, & \textup{ if $\Char k \ne 2$;} \\
	x_1 x_2 + x_3 x_4 + \cdots + x_{n-1} x_n, & \textup{ if $\Char k = 2$ and $n$ is even;} \\
	x_1 x_2 + x_3 x_4 + \cdots + x_{n-2} x_{n-1} + x_n^2, & \textup{ if $\Char k = 2$ and $n$ is odd.} \\
        \end{cases}
\]
(\emph{General} means that there is a dense open subset $U$ of the coefficient space such that the statement holds for $q$ corresponding to a point of $U$.)
\end{proposition}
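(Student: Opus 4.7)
The plan is to prove each of the three cases by induction on $n$, exhibiting in each case a dense open subset $U$ of the coefficient space on which a $\GL_n$-change of variables realizes the stated normal form. The key ingredients are algebraic closedness of $k$ (to extract square roots and to solve auxiliary quadratic equations) and careful bookkeeping of the associated symmetric bilinear form $\beta(x,y) \colonequals q(x+y)-q(x)-q(y)$.

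In characteristic $\ne 2$, take $U$ to be the open locus $\{\Det \ne 0\}$ where $\beta$ is nondegenerate (Proposition~\ref{P:discriminant of quadratic form}); standard Gram--Schmidt completes the case: pick $v$ with $q(v)\ne 0$ (possible since $2q(v)=\beta(v,v)$ and $\beta\ne 0$), rescale using a square root of $q(v)$ to achieve $q(v)=1$, restrict $q$ to the hyperplane $v^\perp$, and apply induction.

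The characteristic $2$ case is more delicate because $\beta$ is forced to be alternating. For $n$ even let $U$ be the locus where $\beta$ is nondegenerate (cut out by the Pfaffian of the Gram matrix); find $e_1,e_2$ with $\beta(e_1,e_2)=1$, kill $q(e_2)$ by replacing $e_2\mapsto e_2+\alpha e_1$ where $\alpha\in\kbar$ is a root of $q(e_1)\alpha^2+\alpha+q(e_2)=0$, kill $q(e_1)$ by replacing $e_1\mapsto e_1+q(e_1)\,e_2$, and induct on the $(n-2)$-dimensional $\beta$-perpendicular complement, which inherits a nondegenerate $\beta$. For $n$ odd, any alternating matrix of odd size in characteristic~$2$ is singular, so take $U$ to be the locus where $\beta$ has rank exactly $n-1$ and $q$ does not vanish on the $1$-dimensional radical $V_0$ of $\beta$; these are both open conditions and the stated normal form lies in $U$. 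For $q\in U$, rescale a generator $e_n$ of $V_0$ by a square root of $q(e_n)$ so that $q(e_n)=1$, pick any vector-space complement $V_1$ of $V_0$, and observe that $q=q|_{V_0}+q|_{V_1}$ (since $V_0$ lies in the radical of $\beta$, the cross term $\beta(v_0,v_1)$ vanishes); the form $q|_{V_1}$ then carries a nondegenerate alternating $\beta$ in the even dimension $n-1$, reducing this case to the previous subcase.

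The main obstacle is the characteristic~$2$, $n$ even subcase: one must check that the equation $q(e_1)\alpha^2+\alpha+q(e_2)=0$ is solvable over $\kbar$ in both the $q(e_1)\ne 0$ and $q(e_1)=0$ branches, that neither adjustment destroys the normalization $\beta(e_1,e_2)=1$, and that the $\beta$-perpendicular complement of $\langle e_1,e_2\rangle$ is genuinely $(n-2)$-dimensional with nondegenerate inherited $\beta$. Each of these reduces to a short direct computation using only that $\beta(v,v)=0$ and that $\beta(e_1,e_2)=1$.
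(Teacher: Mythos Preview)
Your proof is correct and follows the same approach as the paper---identify the open locus $U$ by the rank of the associated bilinear form $\beta$ (equivalently, of the Hessian matrix $M$), then invoke the structure theory of quadratic forms---though you spell out the characteristic~$2$ hyperbolic-plane splitting more explicitly than the paper does. The only organizational difference is in the odd characteristic~$2$ case: you split off the one-dimensional radical first and reduce to the even subcase on a complement, whereas the paper writes $q$ as $x_1x_2+\cdots+x_{n-2}x_{n-1}+\ell^2$ (using that squaring is additive in characteristic~$2$) and then eliminates $x_1,\dots,x_{n-1}$ from $\ell$ by shears of the form $x_{2i}\mapsto x_{2i}+c\,x_{2i-1}$.
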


\begin{proof}
The associated symmetric bilinear space may be identified with the matrix
$M \colonequals \left( \del^2 q/\del x_i \del x_j \right)$.

First suppose that $\Char k \ne 2$.
For the general $q$, the symmetric matrix $M$ has rank $n$ (since $\det M \ne 0$ defines a nonempty open subset), and after a change of variables $M$ is diagonal and $q$ is $x_1^2+\cdots+x_n^2$.

Next suppose that $\Char k=2$.
Then $M$ is symplectic, so $\rank M$ is even.
If $n$ is even, then for the general $q$, the matrix $M$ is of rank $n$,
and after a change of variable to put $M$ in standard form,
$q$ is $x_1 x_2 + \cdots + x_{n-1} x_n$.
Now suppose that $n$ is odd.
For the general $q$, the matrix $M$ is of rank $n-1$.
After a change of variables,
$q$ is $x_1 x_2 + \cdots + x_{n-2} x_{n-1} + \ell^2$, for some linear form $\ell$.
By adding a multiple of $x_1$ to $x_2$, we may assume that $x_1$ does not appear in $\ell$.
Similarly, we can eliminate $x_2,\ldots,x_{n-1}$ from $\ell$,
so $\ell$ is a multiple of $x_n$.
For the general $q$, we may assume that $\ell$ is a \emph{nonzero} multiple of $x_n$.
By scaling, we may assume that $\ell=x_n$.
Now $q= x_1 x_2 + x_3 x_4 + \cdots x_{n-2} x_{n-1} + x_n^2$.
\end{proof}

\begin{proposition}
\label{P:quadratic form}
Let $R$ be a discrete valuation ring.
\begin{enumerate}[\upshape (a)]
\item
\label{I:symmetric bilinear space}
Each symmetric bilinear space over $R$
is an orthogonal direct sum of spaces of rank $1$ or~$2$.
\item
\label{I:quadratic form}
Every quadratic form $f(x_0,\ldots,x_n)$ over $R$
is equivalent to one of the form
\[
	\sum_{i=1}^I (a_i x_i^2 + b_i x_i y_i + c_i y_i^2)
	+ \sum_{j=1}^J d_j z_j^2
\]
with $2I+J=n+1$ and $a_i,b_i,c_i,d_j \in R$.
\item
\label{I:inequality for quadratic form}
Let $f$ be as in~\eqref{I:quadratic form}.
Let $H = \Proj(R[x_0,\ldots,x_n]/\ideal{f})$.
If $H_k$ is smooth, then $v(\Delta(f))=0$.
Otherwise, $v(\Delta(f)) \ge \dim \, (H_k)_{\sing} + 1$.
\end{enumerate}
\end{proposition}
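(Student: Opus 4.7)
For part~(a), I would induct on $\rank M$. Let $\nu := \min\{v(\beta(x,y)) : x, y \in M\}$; if $\nu = \infty$ then $\beta = 0$ and the claim is trivial (decompose any $R$-basis). If some $v \in M$ satisfies $v(\beta(v,v)) = \nu$, then $v$ is primitive in $M$ (else $v \in \pi M$ would force $v(\beta(v,v)) \ge \nu + 2$), and $Rv$ splits off as an orthogonal summand because the projection $w \mapsto w - (\beta(w,v)/\beta(v,v))\,v$ maps $M$ to $M$ (using $v(\beta(w,v)) \ge \nu$). Otherwise every self-pairing $\beta(v,v)$ has valuation $>\nu$; pick $v,w$ with $v(\beta(v,w)) = \nu$. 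These are $R$-independent, and on $N := Rv + Rw$ the matrix of $\beta$ has determinant of valuation exactly $2\nu$. A Cramer's-rule argument using $v(\beta(x,v)), v(\beta(x,w)) \ge \nu$ for all $x \in M$ shows that $M = N \oplus N^\perp$. In either case apply the induction hypothesis to the orthogonal complement.

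Part~(b) follows from~(a) applied to the symmetric bilinear form $\beta(x,y) := q(x+y)-q(x)-q(y)$, since $\beta(v,w) = 0$ implies $q(v+w) = q(v) + q(w)$. An orthogonal decomposition of $\beta$ into rank-$1$ and rank-$2$ summands therefore also decomposes $q$, with rank-$1$ summands yielding terms $d_j z_j^2$ (where $d_j = q(v)$) and rank-$2$ summands yielding terms $a_i x_i^2 + b_i x_i y_i + c_i y_i^2$ (where $a_i = q(v)$, $b_i = \beta(v,w)$, $c_i = q(w)$).

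For part~(c), the equivalence ``$H_k$ smooth iff $v(\Delta(f)) = 0$'' is built into the defining property of $\Delta$ recorded in Section~\ref{S:discriminant}. For the main inequality I use Proposition~\ref{P:discriminant of quadratic form} and the block-diagonal Hessian of the normal form to compute
\[
  \Det(f) = \prod_{i=1}^I (4a_i c_i - b_i^2) \cdot \prod_{j=1}^J (2d_j).
\]
The partials of $\bar f$ also decouple by block, so the projective linear subspace $V \subset \PP^n_k$ cut out by the partials has affine dimension $m = \sum_i \dim K_i + \sum_j T_j$, where $K_i$ is the mod-$\pi$ kernel of the rank-$2$ block Hessian and $T_j \in \{0,1\}$ records whether the $j$th rank-$1$ partial vanishes mod $\pi$. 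In residue characteristic $\ne 2$, Euler's relation places $\bar f$ in the ideal of partials, so $(H_k)_{\sing} = V$ and $\dim(H_k)_{\sing} + 1 = m$; a block-by-block check yields $v(4a_ic_i - b_i^2) \ge \dim K_i$ and $v(2d_j) \ge T_j$, hence $v(\Det(f)) \ge m$, and Proposition~\ref{P:discriminant of quadratic form} transfers this to $v(\Delta(f))$.

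The hard case is residue characteristic $2$ combined with the factor $\Delta = \pm \Det/2$ when $n$ is even. There $(H_k)_{\sing}$ is cut out of $V$ by $\bar f|_V$, which over $\bar k$ equals $\ell^2$ for some linear form $\ell$; so $\dim(H_k)_{\sing} + 1$ is $m - 1$ if $\ell \ne 0$ and $m$ if $\ell = 0$. Moreover $n$ even forces $J = n+1-2I$ to be odd, hence $J \ge 1$, and each rank-$1$ factor satisfies $v(2d_j) \ge v(2) \ge 1$; the $J$ rank-$1$ blocks collectively contribute $J \cdot v(2)$ to $v(\Det(f))$ beyond their $J$-fold contribution to $\dim V$, absorbing the $v(2)$ lost to $\Delta = \pm\Det/2$. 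The extra slack needed in the case $\ell = 0$ comes from the additional $v(d_j) \ge 1$ per rank-$1$ block forced by $\bar d_j = 0$. Cases in which $\Delta(f)$ vanishes identically (e.g.\ in equal characteristic $2$ with $J \ge 3$) satisfy the bound trivially.
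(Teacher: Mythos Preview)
Your argument follows essentially the same route as the paper's: split off rank-$1$ or rank-$2$ summands for~(a), deduce~(b), and for~(c) compute $\Det(f)$ block-by-block against a block-by-block bound on $\dim (H_k)_{\sing}$. The paper organizes the characteristic-$2$ bookkeeping via the counts $I_0,I_1,J_0,J_1$ rather than your linear form~$\ell$, but the content is the same (indeed your $\ell\ne 0$ condition is slightly sharper than the paper's ``$J_0\ge 1$'').

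There is, however, one genuine gap. In your ``hard case'' paragraph you pass from $v(\Det(f))$ to $v(\Delta(f))$ by subtracting $v(2)$, and you invoke the extra $J\cdot v(2)$ coming from the rank-$1$ factors $2d_j$. When $R$ has \emph{equal} characteristic~$2$ this is $\infty-\infty$ and the argument is vacuous. Your escape clause (``$\Delta(f)$ vanishes identically, e.g.\ for $J\ge 3$'') does not cover $J=1$, which is a live case since $n$ even forces $J$ odd. Concretely, for $n$ even, $J=1$, $\Char R=2$ one has $\Delta(f)=\pm\prod_i b_i^2\cdot d_1$, which is generically nonzero, so you cannot dismiss it.

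The fix is to divide out the $2$ \emph{over $\Z$} before specializing: from $\Det(f)=2^{J}\prod_i(4a_ic_i-b_i^2)\prod_j d_j$ and Proposition~\ref{P:discriminant of quadratic form} one gets the polynomial identity
\[
  \Delta(f)=\pm\,2^{J-1}\prod_i(4a_ic_i-b_i^2)\prod_j d_j
  \qquad\text{in }\Z[a_i,b_i,c_i,d_j],
\]
which specializes to $R$ unconditionally. Now $v(\Delta(f))\ge (J-1)v(2)+\sum_i\dim K_i+\sum_j v(d_j)$ with the convention $0\cdot\infty=0$, and your $\ell$-analysis finishes the argument in every case. (This is exactly what the paper's line ``$v(\Delta(f))\ge 2I_1+J'v(2)+J_1$'' with $J'=J-1$ is encoding.) You should also say explicitly that the $n$-odd, $\Char k=2$ case is covered: there $\Delta=\pm\Det$, so your block inequality $v(\Det(f))\ge m$ together with $\dim(H_k)_{\sing}+1\le m$ already suffices.
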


\begin{proof}\hfill
\begin{enumerate}[\upshape (a)]
\item
(We paraphrase an argument of Jean-Pierre Tignol
adapted from the proof of \cite{Verstraete2019}*{Proposition~4.10}.)
Let $(M,\beta)$ be a nonzero symmetric bilinear space.
We may assume that $\beta \ne 0$.
By dividing $\beta$ by a nonzero element of $R$,
we may assume that $\beta(M,M) \not\subset \mm$.
We claim that there exists a free $R$-module $N$ of rank $1$ or $2$
with a homomorphism $N \to M$
such that $\beta$ induces a \defi{regular} pairing on $N$
(i.e., the composition $N \to M \stackrel{\beta}\to M^\vee \to N^\vee$
is an isomorphism);
then $N \to M$ is injective, and $M$ is the orthogonal direct sum
of $N$ and $N^\perp \colonequals \ker(M \to N^\vee)$,
so we are done by induction on $\rank(M)$.

If there exists $e \in M$ with $\beta(e,e) \in R^\times$ a unit,
then let $N=Re$.
Otherwise, choose $c,d \in M$ with $\beta(c,d) \in R^\times$
and let $N=Rc \directsum Rd$;
the induced pairing is regular
since its matrix is invertible, being congruent mod $\pi$ to
$\begin{pmatrix} 0 & \beta(c,d) \\ \beta(c,d) & 0 \end{pmatrix}$.
\item
Decomposing a quadratic space is equivalent
to decomposing the associated symmetric bilinear space,
even if $\Char k=2$.
\item
The case where $H_k$ is smooth is true by the definition of discriminant,
so suppose that $(H_k)_{\sing} \ne \emptyset$.
  
First suppose $\Char k \ne 2$.
Then $f$ is equivalent to $\sum a_i x_i^2$ for some $a_i \in R$,
and
\[
	\dim \, (H_k)_{\sing} = \#\{i : v(a_i) \ge 1 \} - 1 \le v(\Det(f)) - 1 = v(\Delta(f)) - 1,
\]
by Proposition~\ref{P:discriminant of quadratic form}.

Now suppose $\Char k = 2$.
Let $I_0 = \#\{i : v(b_i) = 0\}$ and $I_1 = \#\{i : v(b_i) \ge 1\}$.
Let $J_0 = \#\{j : v(d_j) = 0\}$ and $J_1 = \#\{j : v(d_j) \ge 1\}$.
If $n$ is odd, let $J' \colonequals J$.
If $n$ is even, let $J' \colonequals J-1$.
In both cases $J' \ge 0$ (if $n$ is even, then $J$ is odd).
The common zero locus in $\PP^n_k$
of the polynomials $\del f/\del x_i$ and $\del f/\del y_i$
for $i \in I_0$ is of dimension $n-2I_0$,
and including the condition $f=0$ drops the dimension by $1$ more
if $J_0 \ge 1$.
Thus $\dim \, (H_k)_{\sing} \le n-2I_0$, with strict inequality if $J_0 \ge 1$.
On the other hand,
$v(4a_i c_i -b_i^2) \ge 2$ whenever $v(b_i) \ge 1$,
and $v(2d_j) \ge v(2) + v(d_j)$ for all $j$,
so Proposition~\ref{P:discriminant of quadratic form} implies
\begin{align*}
	v(\Delta(f))
	&\ge 2I_1 + J' v(2) + J_1 \\
	&= (n - 2I_0) + J' v(2) - J_0 + 1 \\
	&\ge \dim \, (H_k)_{\sing}  + J' v(2) - J_0 + 1.
\end{align*}
If $J_0 \ge 1$, then the inequality above is strict
and $J' v(2) \ge (J_0-1) v(2) \ge J_0-1$,
so $v(\Delta(f)) \ge \dim \, (H_k)_{\sing} + 1$.
If $J_0=0$, then instead use $J' v(2) \ge 0$
to again get $v(\Delta(f)) \ge \dim \, (H_k)_{\sing} + 1$.\qedhere
\end{enumerate}
\end{proof}

\section{Nondegenerate double points and ordinary double points}
\label{S:nondegenerate double points}

\begin{definition}[\cite{SGA7.1}*{VI.6}]
  \label{D:nondegenerate double point}
Let $k$ be a field.
Let $X$ be a finite-type $k$-scheme.
A $k$-point $Q \in X$ is called a \defi{nondegenerate double point}
(or \defi{nondegenerate quadratic point})
if there exist $n \ge 1$ and $f \in k\pws{x_1,\ldots,x_n}$
such that there is an isomorphism of complete $k$-algebras
$\widehat{\OO}_{X,Q} \isom k\pws{x_1,\ldots,x_n}/\ideal{f}$
and an equality of ideals
$\ideal{\del f/\del x_1,\ldots,\del f/\del x_n} = \ideal{x_1,\ldots,x_n}$.
\end{definition}

\begin{remark}
\label{R:nondegenerate double point}
The ideal equality is equivalent to saying that
$Q$ is an isolated reduced point of the singular subscheme $X_{\sing}$.
\end{remark}

\begin{remark}
  \label{R:nondegenerate double point on affine hypersurface}
If $X$ is an affine hypersurface in $\Aff^n_k$ given by the equation $h(x_1,\ldots,x_n) = 0$, then
a singular point~$Q$ on~$X$ is a nondegenerate double point if and only
if $\det\left(\frac{\del^2 h}{\del x_i \del x_j}\right)$ does not vanish
at~$Q$.
\end{remark}

\begin{remark}
\label{R:explicit nondegenerate double point}
Suppose that $n$ and $f$ as in Definition~\ref{D:nondegenerate double point}
exist.
Then $f$ can be taken to be a quadratic form \cite{SGA7.1}*{VI.6.1}.
If, moreover, $k$ is algebraically closed, then
\begin{itemize}
\item if $\Char k \ne 2$, then one can take $f \colonequals x_1^2+\ldots+x_n^2$;
\item if $\Char k = 2$, then $n$ must be even and one can take
$f \colonequals x_1 x_2 + x_3 x_4 + \cdots+ x_{n-1} x_n$.
\end{itemize}
\end{remark}

\begin{definition}[\cite{SGA7.1}*{Definition~VI.6.6}]
There is also the notion of \defi{ordinary double point},
which is the same except when $\Char k=2$ and the local dimension $n$ of $X$ at $Q$ is odd.
In that case, nondegeneracy is impossible
so one calls a singularity an ordinary double point  
if and only if it is analytically equivalent over an algebraic closure
to that defined by $x_1 x_2 + \cdots+ x_{n-2} x_{n-1} + x_n^2$.
\end{definition}

\section{The general singular locus}
\label{S:general singular locus}

\begin{proposition}
\label{P:general singular locus}
Fix $n \ge 1$ and $d \ge 2$ and an algebraically closed field $k$.
For general $f \in k[x_0,\ldots,x_n]_d$ with $H_f$ singular
$($that is, $f$ corresponding to a general point of $D(k)$$)$,
the hypersurface $H_f$ has a unique singularity 
and it is an ordinary double point.
\end{proposition}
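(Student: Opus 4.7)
The plan is to establish the uniqueness of the singular point (via a dimension count) and the ordinary-double-point type (by analyzing the generic element of $\calH_{\sing}$) separately, and then combine the two. For uniqueness, let $Z \subset \PP^n \times \PP^n \times \Aff^N$ be the locally closed subscheme of triples $(P, P', f)$ with $P \ne P'$ and both $P, P'$ singular on $H_f$. For a fixed distinct pair $(P, P')$, the conditions $f(P) = f(P') = 0$ and $\partial_i f(P) = \partial_i f(P') = 0$ for $i = 0, \ldots, n$ cut out (modulo the two Euler relations) a linear subspace of $\Aff^N$ of codimension $2(n+1)$; the independence needed here follows from the single-point analysis in Section~\ref{S:discriminant} together with the existence, for $P \ne P'$, of a polynomial vanishing to high order at $P$ but not at $P'$. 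Thus $\dim Z \le 2n + N - 2(n+1) = N-2$, so the image of $Z$ in $\Aff^N$---which contains every $f$ whose singular subscheme $(H_f)_{\sing}$ has two distinct closed points, in particular every $f$ with positive-dimensional singular subscheme---has dimension strictly less than $\dim D = N-1$. Hence a general $f \in D$ has a unique singular point.

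For the ordinary-double-point assertion, I plan to exploit the $\PGL_{n+1}(k)$-action on $\calH_{\sing}$: since it commutes with $\pi\colon \calH_{\sing} \to \PP^n$ and is transitive on $\PP^n$, it suffices to analyze a general element of $\pi^{-1}(P_0)$ for $P_0 \colonequals (1:0:\cdots:0)$. Dehomogenizing via $x_i \colonequals X_i/X_0$, a polynomial $f \in \pi^{-1}(P_0)$ corresponds to $g = q(x_1,\ldots,x_n) + h$, with $q$ the quadratic part at the origin and $h$ a sum of monomials of degree $\ge 3$. Since the map $f \mapsto q$ is a linear surjection from $\pi^{-1}(P_0)$ onto the space of quadratic forms in $x_1,\ldots,x_n$, a general $f$ has a general $q$; so by Proposition~\ref{P:general quadratic form}, after a linear change of variables in $x_1,\ldots,x_n$ we may assume $q$ is one of the three standard forms listed there.

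If $\Char k \ne 2$, or if $\Char k = 2$ and $n$ is even, the standard $q$ is nondegenerate, so the Hessian $(\partial^2 g/\partial x_i \partial x_j)(0) = (\partial^2 q/\partial x_i \partial x_j)(0)$ is invertible; by Remark~\ref{R:nondegenerate double point on affine hypersurface}, $P_0$ is a nondegenerate (hence ordinary) double point of $H_f$. The main obstacle is the remaining case $\Char k = 2$ with $n$ odd, in which $q = x_1 x_2 + \cdots + x_{n-2}x_{n-1} + x_n^2$ has bilinear rank only $n-1$ and the Hessian criterion does not apply. For this case I plan a formal splitting-lemma argument in $k\pws{x_1,\ldots,x_n}$: for each hyperbolic pair $(x_{2i-1}, x_{2i})$ of $q$, Taylor-expand $g$ in $x_{2i-1}$ and absorb the terms of $h$ divisible by $x_{2i-1}$ into a formal substitution $x_{2i} \mapsto x_{2i} + \psi$ (and symmetrically for $x_{2i-1}$); iterating (which converges formally) reduces $g$, in suitable formal coordinates, to $x_1 x_2 + \cdots + x_{n-2}x_{n-1} + H(x_n)$ with $H \in k\pws{x_n}$ of order exactly $2$, so $H = x_n^2 \cdot u(x_n)$ for a unit $u$. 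A final rescaling $x_1 \mapsto u(x_n)^{-1} x_1$ multiplies the whole equation by $u(x_n)$, showing that $(g)$ coincides with the ideal of the standard form in $k\pws{x_1,\ldots,x_n}$; hence $P_0$ is an ordinary double point of $H_f$. Combined with the uniqueness established above, this proves the proposition.
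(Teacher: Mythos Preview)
Your overall strategy matches the paper's: a dimension count for uniqueness, then Proposition~\ref{P:general quadratic form} for the type of singularity. There is, however, a genuine gap in the uniqueness step when $d=2$.

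\medskip

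\textbf{The dimension count fails for $d=2$.}
Your claim that the $2(n+1)$ singularity conditions at $P$ and $P'$ are independent is false for $d=2$. Take $P=(1:0:\cdots:0)$ and $P'=(0:1:0:\cdots:0)$. The $P$-conditions kill the coefficients of $x_0 x_i$ ($0 \le i \le n$) and the $P'$-conditions kill the coefficients of $x_1 x_j$ ($0 \le j \le n$); these two sets share the monomial $x_0 x_1$, so only $2n+1$ of the $2(n+1)$ conditions are independent. Consequently each fiber of $Z \to (\PP^n\times\PP^n)'$ has dimension $N-(2n+1)$ and $\dim Z = N-1 = \dim D$, not $N-2$. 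Your one-line justification (``existence of a polynomial vanishing to high order at $P$ but not at $P'$'') only shows $V_P \not\subset V_{P'}$; it does not establish that the 1-jet map $V_{P'} \to (\text{1-jets at }P)$ is surjective, and indeed for $d=2$ its image is only $n$-dimensional. The paper avoids this by treating $d=2$ separately via a direct normal-form analysis of the matrix $(\partial^2 f/\partial x_i\partial x_j)$; for $d\ge 3$ it does exactly your dimension count, noting that the monomial sets $\{x_0^{d-1}x_i\}$ and $\{x_n^{d-1}x_i\}$ are disjoint so the conditions are manifestly independent.

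\medskip

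\textbf{The final rescaling is mis-stated.}
In the $\Char k=2$, $n$ odd case, after the splitting lemma you have $g \sim x_1x_2+\cdots+x_{n-2}x_{n-1}+x_n^2\,u(x_n)$ with $u$ a unit. The single substitution $x_1\mapsto u^{-1}x_1$ does \emph{not} turn this into a unit times the standard form (compute: you get $u^{-1}x_1x_2+x_3x_4+\cdots+x_n^2u$). What works is substituting $x_{2i}\mapsto u\,x_{2i}$ for every $i=1,\ldots,(n-1)/2$: then $g$ becomes $u\cdot(x_1x_2+\cdots+x_{n-2}x_{n-1}+x_n^2)$, so $(g)$ equals the ideal of the standard form.

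\medskip

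Apart from these two points, your argument for $d\ge 3$ is the paper's argument with additional detail; in particular, your explicit splitting-lemma reduction in the $\Char k=2$, odd~$n$ case fills in something the paper leaves implicit.
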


\begin{proof}
\emph{Case $d=2$.}
Let $M=\left( \del^2 f/\del x_i \del x_j \right) \in \M_{n+1}(k)$.

First suppose that $\Char k \ne 2$.
For the general $f$, the symmetric matrix $M$ has rank $n$ (rank $\ge n$ is an open condition, and rank $n+1$ would mean that $H_f$ is smooth), and after a change of variable it is diagonal and $f$ is $x_1^2+\cdots+x_n^2$, and $(H_f)_{\sing}$ is the single reduced point $(1:0:\cdots:0)$.

Next suppose that $\Char k=2$.
Then $M$ is symplectic, so $\rank M$ is even.
If $n$ is even, then for the general $f$, the matrix $M$ is of rank $n$,
and after a change of variable to put $M$ in standard form,
$f$ is $x_1 x_2 + \cdots + x_{n-1} x_n$, 
and $(H_f)_{\sing}$ is the single reduced point $(1:0:\cdots:0)$.
If $n$ is odd, then for the general $f$, the matrix $M$ is of rank $n-1$,
and after a change of variable,
$f$ is $x_1 x_2 + \cdots + x_{n-2} x_{n-1} + x_n^2$, 
and $(H_f)_{\sing}$ is a nonreduced degree~$2$ scheme supported at $(1:0:\cdots:0)$.

In all these cases, the unique point of $(H_f)_{\sing}$ is an ordinary double point of $H_f$ (and it is even nondegenerate, except when $\Char k=2$ and $n$ is odd).

\medskip

\emph{Case $d \ge 3$.}
Let $(\PP^n \times \PP^n)'$ be the locus of pairs of points $(P,Q) \in \PP^n \times \PP^n$ with $P \ne Q$.
Let $I$ be the locus of $(f,P,Q) \in \Aff^N \times (\PP^n \times \PP^n)'$ such that $H_f$ is singular at both $P$ and $Q$.
The fibers of $I \to (\PP^n \times \PP^n)'$ are linear subspaces of codimension $2n+2$ in $\Aff^N$ since we may assume $P=(1:0:\cdots:0)$ and $Q=(0:\cdots:0:1)$, in which case $H_f$ is singular at $P$ and $Q$ if and only if the coefficients of $x_0^{d-1} x_i$ and $x_n^{d-1} x_i$ for $i=0,\ldots,n$ all vanish.
Thus $\dim I = (N-(2n+2)) + \dim (\PP^n \times \PP^n)' = N-2$, so $I$ does not dominate the $(N-1)$-dimensional locus $D \subset \Aff^N$ corresponding to $f$ with $H_f$ singular.

\medskip

Thus for general $f$ with $H_f$ singular, $H_f$ has only one singularity,
which we may assume is $P \colonequals (1:0:\cdots:0)$.
Proposition~\ref{P:general quadratic form} applied to the degree~$2$ Taylor polynomial at $P$ of a dehomogenization of $f$ shows that for general $f$, the singularity is an ordinary double point.
\end{proof}

We use subscripts to denote base change: e.g.,
$D_A \colonequals D \times_{\Spec \Z} \Spec A$ 
and $\calH_{\sing,A} \colonequals \calH_{\sing} \times_{\Spec \Z} \Spec A \isom (\calH_A)_{\sing}$ for any ring $A$.
For an irreducible scheme $X$, let $\kappa(X)$ be the function field of the integral scheme $X_{\red}$.
Recall that $\phi \colon \calH \injects \PP^n \times \Aff^N \surjects \Aff^N$ was the second projection.
Restricting $\phi$ yields a proper surjective morphism $\varphi \colon \calH_{\sing} \to D$.

\begin{proposition}
\label{P:birational}
The morphism $\varphi \colon \calH_{\sing} \to D$ is birational.
The same holds after base change to any integral domain $A$,
except when $\Char A=2$ and $n$ is odd, in which case $\kappa(\calH_{\sing,A})$ is purely inseparable of degree~$2$ over $\kappa(D_A)$.
\end{proposition}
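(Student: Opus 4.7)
I will analyze the generic fiber of $\varphi_A$ in order to compute $[\kappa(\calH_{\sing,A}) : \kappa(D_A)]$.

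First, I would note that the proof in Section~\ref{S:discriminant} that $\calH_{\sing} \to \PP^n_\Z$ is a rank $N-n-1$ vector bundle rests only on base-change-invariant facts (linear independence modulo the Euler relation of $F,\del F/\del x_0,\ldots,\del F/\del x_n$ in the $a_\ii$ variables), so the same holds for $\calH_{\sing,A} \to \PP^n_A$ over any integral domain $A$. Hence $\calH_{\sing,A}$ is integral, and therefore so is $D_A$, its scheme-theoretic image under $\varphi_A$. Using Proposition~\ref{P:general singular locus} to exhibit a hypersurface with $0$-dimensional singular scheme, some fiber of $\varphi_A$ is $0$-dimensional, so $\varphi_A$ is generically quasi-finite, and by properness finite over a dense open $V \subset D_A$ containing the generic point $\eta$.

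Letting $K = \kappa(D_A)$ and $K' = \kappa(\calH_{\sing,A})$, the generic fiber of $\varphi_A|_V$ is $\Spec K'$, of length $[K':K]$ as a $K$-scheme. This fiber coincides with $(H_{f_\eta})_{\sing}$ for $f_\eta$ the universal polynomial at $\eta$, and its base change to $\overline{K}$ is $(H_{f_\eta \otimes \overline{K}})_{\sing}$, still of length $[K':K]$. By upper semi-continuity of fiber length on $V$ (via coherence of $(\varphi_A|_V)_*\OO$), the generic length equals the minimum fiber length over $V$.

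Next, Proposition~\ref{P:general singular locus} applied over $\overline{\Frac A}$ produces a point of $V$ witnessing this minimum. In the non-exceptional case ($\Char A \neq 2$ or $n$ even), the witness has fiber length~$1$ (a single nondegenerate double point), forcing $[K':K] = 1$ and birationality. In the exceptional case ($\Char A = 2$, $n$ odd), the Hessian $(\del^2 g/\del x_i \del x_j)$ of any quadratic form $g$ in $n$ variables in characteristic~$2$ is symmetric with zero diagonal, hence an alternating matrix of odd order, so of rank $\leq n-1$; this precludes any nondegenerate singular point, so by Remark~\ref{R:nondegenerate double point on affine hypersurface} every fiber on $V$ has length $\geq 2$, and the length-$2$ witness from Proposition~\ref{P:general singular locus} gives $[K':K] = 2$. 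Since $(H_{f_\eta \otimes \overline{K}})_{\sing}$ is supported at the single ordinary double point, $K' \otimes_K \overline{K}$ is a local artinian $\overline{K}$-algebra of length~$2$, which precludes $K'/K$ being separable; hence $K'/K$ is purely inseparable of degree~$2$.

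I expect the main technical obstacle to be the lower-bound step in the exceptional case, which depends critically on the Hessian rank bound ruling out nondegenerate singular points in characteristic~$2$ with $n$ odd; the upper-bound step is a routine upper semi-continuity plus Proposition~\ref{P:general singular locus} argument.
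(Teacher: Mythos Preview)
Your argument is correct and follows essentially the same route as the paper's proof: both rest on Proposition~\ref{P:general singular locus} to identify the structure of the general fiber of~$\varphi_A$, combined with irreducibility of source and target, to pin down $[\kappa(\calH_{\sing,A}):\kappa(D_A)]$. The paper streamlines by first reducing to $A=k$ algebraically closed, whereas you carry the general~$A$ throughout and make the semi-continuity step and the Hessian obstruction explicit; these are expository rather than mathematical differences.

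Two small points to tighten. First, in the exceptional case $D_A$ as a \emph{base change} is typically non-reduced (by \cite{Saito2012}*{Theorem~4.2} one has $\Delta \equiv A^2 \pmod 2$), so it is not literally the scheme-theoretic image of~$\varphi_A$; rather, that image is $(D_A)_{\red}$. This is harmless since the paper's convention $\kappa(D_A)=\kappa((D_A)_{\red})$ makes the function fields agree, but your sentence ``therefore so is $D_A$'' should be read accordingly. Second, your assertion that $(H_{f_\eta\otimes\overline K})_{\sing}$ is supported at a single point is not an immediate consequence of Proposition~\ref{P:general singular locus}, which speaks only of general \emph{closed} points; the clean justification is already implicit in your Hessian argument: no singular point is nondegenerate, so by Remark~\ref{R:nondegenerate double point} every point of the singular scheme has local length $\ge 2$, and since the total length of the geometric generic fiber is~$2$ it must consist of a single point. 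Equivalently, if $K'/K$ were separable the generic geometric fiber would be two reduced points, each then a nondegenerate double point, contradicting the Hessian bound.
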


\begin{proof}
We may assume that $A$ is a field $k$, and that $k$ is algebraically closed.
The result follows from \cite{Saito2012}*{Proposition~2.12}, except when $\Char k=2$ and $n$ is odd.  
We will reprove those cases and prove the missing cases.

For a general $f \in D(k)$, Proposition~\ref{P:general singular locus} implies that $H_f$ has an ordinary double point, so $(H_f)_{\sing}$ is a finite connected scheme of degree $1$ or $2$, the latter occurring exactly when $\Char k=2$ and $n$ is odd.
Since $(H_f)_{\sing}$ is the fiber of $\varphi$ above $f \in D(k)$,
the general fiber of $\varphi$ is described as in the previous sentence.
The scheme $\calH_{\sing,k}$ is smooth over $k$, hence irreducible,
and its image under $\varphi$ is topologically $D_k$,
so $D_k$ is irreducible too.
The result follows from the previous two sentences.
\end{proof}

Let $\Dfinite \colonequals \{d \in D: \dim \varphi^{-1}(d) = 0\}$; this is the
subset of points such that the corresponding hypersurface has finitely many singular points.
Let $D_1 \colonequals \{d \in D : \varphi^{-1}(d) \to \{d\} \textup{ is an isomorphism}\}$;
this is the subset of points such that the singular locus of the corresponding hypersurface is a single reduced point.

\begin{lemma} \label{L:normalization of Dprime}
  \hfill
  \begin{enumerate}[\upshape (a)]
  \item \label{I:D1 and Dfinite are open}
    The subsets $D_1 \subset \Dfinite \subset D$ are open in $D$.
    Identify them with open subschemes of $D$.
  \item \label{I:above Dfinite}
    $\varphi^{-1}(\Dfinite) \to \Dfinite$ is the normalization of $\Dfinite$.
    The same holds after base change to any normal noetherian domain $A$, except when $\Char A=2$ and $n$ is odd.
In the exceptional case, $\varphi^{-1}(\DfiniteA)$ is the normalization of $(\DfiniteA)_{\red}$ in the purely inseparable extension $\kappa(\calH_{\sing,A})$ of its function field.
  \item \label{I:above D1}
    $\varphi^{-1}(D_1) \to D_1$ is an isomorphism of schemes over $\Z$.
  \end{enumerate}
\end{lemma}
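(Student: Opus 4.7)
My plan is to handle the three parts in order, using earlier parts to bootstrap later ones.

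For part~(a), I would first observe that $\Dfinite$ is the locus where the proper morphism $\varphi$ has fibers of dimension at most~$0$, so its openness is immediate from upper semicontinuity of fiber dimension for proper morphisms. Over $\Dfinite$ the map $\varphi$ is proper and quasi-finite, hence finite, so $\calE \colonequals \varphi_*\OO_{\calH_{\sing}}|_{\Dfinite}$ is a coherent sheaf. Its fiber $\calE(d) \isom \Gamma(\varphi^{-1}(d),\OO)$ has $\kappa(d)$-dimension $\ge 1$ always, with equality precisely when $\varphi^{-1}(d) = \Spec \kappa(d)$, i.e., when $d \in D_1$. Upper semicontinuity of fiber rank for coherent sheaves then makes $D_1$ open in $\Dfinite$.

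For part~(b), over $\Dfinite$ the morphism $\varphi$ is finite (by the above) and, by Proposition~\ref{P:birational}, birational. Its source is open in $\calH_{\sing}$, which is the total space of a vector bundle over $\PP^n$ (Section~\ref{S:discriminant}), hence integral, and is smooth over $\Z$, hence normal. The characterization of normalization as a finite birational morphism from a normal integral scheme then gives the result. For the base change to a normal noetherian domain $A$, these arguments carry over: $\calH_{\sing,A} \to \PP^n_A$ is a vector bundle over an integral base, hence integral, and smooth over the normal base $A$, hence normal. In the non-exceptional case of Proposition~\ref{P:birational}, the function fields $\kappa(\calH_{\sing,A})$ and $\kappa(D_A)$ coincide, and the same argument identifies $\varphi_A^{-1}(\DfiniteA)$ with the normalization of $(\DfiniteA)_{\red}$. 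In the exceptional case, Proposition~\ref{P:birational} instead supplies a purely inseparable degree-$2$ function-field extension, and the same finite morphism realizes the normalization of $(\DfiniteA)_{\red}$ in $\kappa(\calH_{\sing,A})$.

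For part~(c), I would use (b) to present $\varphi$ locally over $\Dfinite$ as $\Spec B \to \Spec A$, with $A$ and $B$ domains (as $D$ and $\calH_{\sing}$ are integral) and $B$ a finite $A$-module. For $d \in D_1$ corresponding to a prime $\mathfrak{p} \subset A$, the condition $B_{\mathfrak{p}}/\mathfrak{p} B_{\mathfrak{p}} = \kappa(\mathfrak{p})$ combined with Nakayama's lemma yields that $B_{\mathfrak{p}}$ is generated by~$1$ over $A_{\mathfrak{p}}$; together with injectivity of $A \hookrightarrow B$ (common fraction field, $B$ a domain), this forces $A_{\mathfrak{p}} = B_{\mathfrak{p}}$. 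Since $B/A$ is a finitely generated $A$-module with $(B/A)_{\mathfrak{p}} = 0$, it vanishes on a distinguished open neighborhood of $d$, on which $\varphi$ is an isomorphism.

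The main obstacle I anticipate is the base-change bookkeeping in~(b): one must verify integrality of $\calH_{\sing,A}$ (via the vector bundle structure over $\PP^n_A$) and normality (via smoothness over the normal base), and be careful that $D_A$ need not be reduced, which is why the statement must be phrased using $(\DfiniteA)_{\red}$ and why the exceptional case must be stated separately. Everything else reduces to routine applications of upper semicontinuity and Nakayama's lemma.
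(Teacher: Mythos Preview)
Your proposal is correct and follows essentially the same approach as the paper: openness of $\Dfinite$ via semicontinuity of fiber dimension, finiteness from proper plus quasi-finite, normalization from finite birational with normal source (using Proposition~\ref{P:birational}), and the isomorphism over $D_1$ via Nakayama and the vanishing of $B/A$ on an open set. The only cosmetic difference is that you prove openness of $D_1$ directly via upper semicontinuity of the fiber rank of the coherent sheaf $\varphi_*\OO_{\calH_{\sing}}|_{\Dfinite}$, whereas the paper defers this to the support argument in part~(c); unwound, the two arguments are the same.
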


\begin{proof}
  \hfill
  \begin{enumerate}[\upshape (a)]
  \item 
    By \cite{EGA-IV.III}*{Corollaire~13.1.5}, $\Dfinite$ is open in $D$.
Openness of $D_1$ will follow from the proof of~(c).
\item 
By Proposition~\ref{P:birational}, $\varphi^{-1}(\Dfinite) \to \Dfinite$ is birational.
It is also quasi-finite and proper,
hence finite by Zariski's main theorem \cite{EGA-III.I}*{Corollaire~4.4.11}.
Moreover, $\calH_{\sing}$ is smooth over $\Z$, hence normal.
The previous three sentences imply that $\varphi^{-1}(\Dfinite) \to \Dfinite$
is the normalization of $\Dfinite$.
The same argument works after base change to any normal noetherian domain,
except that in the exceptional case, the function field extension in Proposition~\ref{P:birational} is purely inseparable of degree~$2$ instead of $1$.
\item
Apply the following to $\varphi^{-1}(\Dfinite) \to \Dfinite$:
If $\psi \colon X \to Y$ is a scheme-theoretically-surjective finite morphism of noetherian schemes and $y \in Y$ is such that $\psi^{-1}(y) \isom \{y\}$,
  then there exists an open neighborhood $U \subset Y$ of $y$
  such that $\psi^{-1}(U) \to U$ is an isomorphism.
  To prove this statement, we may assume that $Y=\Spec A$ and $X=\Spec B$, where $A \to B$ is injective;
  then $U$ may be taken as the complement of the support of the $A$-module $B/A$.\qedhere
  \end{enumerate}
\end{proof}

\begin{remark}
  In Corollary~\ref{C:D1=Dsmooth}, we will identify $D_1$ with the smooth
  locus of~$D$.
\end{remark}

\section{Commutative algebra}
\label{S:CA}

A ring extension $R' \supset R$
is called a \defi{weakly unramified extension}
if $R'$ too is a discrete valuation ring
and $\pi$ is also a uniformizer of $R'$.

\begin{lemma}
\label{L:extension of DVR}
Let $R$ be a discrete valuation ring, with residue field $k$.
For any field extension $k' \supset k$,
there exists a weakly unramified extension $R' \supset R$
with residue field $k'$ $($i.e., isomorphic to $k'$ as $k$-algebra$)$.
\end{lemma}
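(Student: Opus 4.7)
The plan is to split $k'/k$ into simple transcendental and simple algebraic steps, handle each by an explicit construction over any intermediate DVR, and assemble arbitrary $k'$ via Zorn's lemma.

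For a simple transcendental step $\ell \subset \ell(t)$ over an intermediate DVR $S \supset R$ with uniformizer $\pi$ and residue field $\ell$, I would take $S' \colonequals S[t]_{(\pi)}$: since $S[t]/(\pi) = \ell[t]$ is a domain, $(\pi) \subset S[t]$ is prime, and the localization is a local Noetherian domain of Krull dimension~$1$ with principal maximal ideal $(\pi)$ and residue field $\ell(t)$, hence a DVR weakly unramified over $S$. For a simple algebraic step $\ell \subset \ell(\alpha)$, let $\bar g \in \ell[x]$ be the minimal polynomial of $\alpha$ and lift it to a monic polynomial $g \in S[x]$ of the same degree; set $S' \colonequals S[x]/(g)$. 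Gauss's lemma, combined with the irreducibility of $\bar g$, shows that $g$ is irreducible in $S[x]$, so $S'$ is a domain, free of rank $\deg g$ as an $S$-module, hence Noetherian of Krull dimension~$1$. The quotient $S'/(\pi) = \ell[x]/(\bar g) \cong \ell(\alpha)$ is a field, so $(\pi) S'$ is maximal; module-finiteness of $S'/S$ forces every maximal ideal of $S'$ to contract to $(\pi) \subset S$, so $S'$ is local with maximal ideal $(\pi)$, making it a DVR weakly unramified over $S$ with residue field $\ell(\alpha)$.

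For arbitrary $k'/k$, I would apply Zorn's lemma to the poset of pairs $(S, \iota)$ where $S$ is a weakly unramified extension of $R$ (with uniformizer $\pi$) and $\iota$ is a $k$-algebra embedding of the residue field of $S$ into $k'$, ordered by the evident extension relation. Every chain admits an upper bound via filtered colimit, which is again a DVR with uniformizer $\pi$ (each nonzero element already has the form $\pi^n u$ with $u$ a unit at some finite stage, so the same holds in the colimit) whose residue field is the colimit of the intermediate residue fields. A maximal element $(R', \iota)$ must have $\iota$ surjective onto $k'$: otherwise, choosing $\alpha \in k'$ outside the image of $\iota$ and enlarging $R'$ via the transcendental or algebraic step above (according as $\alpha$ is transcendental or algebraic over the image of $\iota$) would yield a strictly larger element, contradicting maximality. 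The main technical hurdle I expect is the algebraic step, where one must verify that $S[x]/(g)$ is genuinely a DVR, namely a local \emph{domain} with principal maximal ideal; this rests on the Gauss-lemma lift of irreducibility of $\bar g$ to $g$ together with the module-finiteness of $S'$ over $S$ to pin down the set of maximal ideals.
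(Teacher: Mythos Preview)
Your proposal is correct and follows essentially the same approach as the paper: both handle the simple transcendental step via $S[t]_{(\pi)}$, the simple algebraic step via $S[x]/(g)$ for a monic lift $g$ of the minimal polynomial, and assemble the general case by Zorn's lemma with direct limits. The paper is terser, citing \cite{SerreLocalFields1979}*{I.\S6, Proposition~15} for the algebraic step where you spell out the Gauss-lemma/integrality argument that $S[x]/(g)$ is a local domain with principal maximal ideal.
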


\begin{proof}
If $k'/k$ is generated by one algebraic element,
say a zero of a monic irreducible polynomial $\bar{f} \in k[x]$,
then we may take $R' \colonequals R[x]/\ideal{f}$ for any monic $f \in R[x]$ reducing to $\bar{f}$
\cite{SerreLocalFields1979}*{I.\S6, Proposition~15}.
If $k'/k$ is generated by one transcendental element $t$,
then we may take the localization $R' \colonequals R[t]_{\ideal{\pi}}$
of the (regular) polynomial ring $R[t]$
at the codimension~$1$ prime $(\pi)$;
the residue field of $R'$ is $\Frac(R[t]/\ideal{\pi}) = k(t)$.
The general case follows from Zorn's lemma, using direct limits.
\end{proof}

\begin{lemma}
\label{L:completion and normalization}
Let $A$ be a noetherian local ring.
Let $\Ahat$ be its completion.
Let $B$ be the integral closure of $A_{\red}$ $($in its fraction field$)$.
Then
\[
	\#\{\textup{minimal primes of $\Ahat$}\}
        \ge \#\{\textup{maximal ideals of $B$}\}.
\]
\end{lemma}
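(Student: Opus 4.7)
The plan is to reduce to the case where $A$ is a local noetherian domain and then, given $s$ distinct maximal ideals of~$B$, to produce $s$ pairwise distinct minimal primes of~$\Ahat$ via an auxiliary finite $A$-subalgebra of~$B$. First I would reduce to $A$ reduced: since completion is flat, $\widehat{A_{\red}} = \Ahat/\nil(A)\Ahat$, and since $\nil(A)\Ahat \subseteq \nil(\Ahat)$, the minimal primes of $\Ahat$ are in bijection with those of $\widehat{A_{\red}}$ via the quotient map, while $B$ depends only on $A_{\red}$. Reading ``its fraction field'' as meaning $A_{\red}$ is a domain (the general reduced case decomposes into domain cases via $B = \prod \overline{A/\pp_i}$ over the minimal primes $\pp_i$ of~$A$, matched by flatness plus going-down to the decomposition of minimal primes of~$\Ahat$ -- those over $\pp_i$ biject with the minimal primes of $\widehat{A/\pp_i}$), we assume $A$ is a noetherian local domain.

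Let $\mm_1,\ldots,\mm_s$ be distinct maximal ideals of~$B$. By prime avoidance, choose $b_i \in \mm_i \setminus \bigcup_{j \ne i}\mm_j$ and set $B_0 \colonequals A[b_1,\ldots,b_s] \subseteq B$. Then $B_0$ is a finite $A$-algebra (integral and finitely generated), a domain (subring of~$B$), and has $s$ pairwise distinct maximal ideals $\mathfrak{n}_i \colonequals \mm_i \cap B_0$ separated by the~$b_j$. Thus $\widehat{B_0} \colonequals B_0 \otimes_A \Ahat = \prod_{\mathfrak{n}} \widehat{(B_0)_{\mathfrak{n}}}$ is a finite $\Ahat$-algebra with at least $s$ complete local noetherian factors (the $\mm B_0$-adic and Jacobson-radical-adic topologies on the semi-local $B_0$ agree), and $\Ahat \hookrightarrow \widehat{B_0}$ by flatness of $A \to \Ahat$.

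For each~$i$, pick a minimal prime $\mathfrak{q}_i$ of $\widehat{(B_0)_{\mathfrak{n}_i}}$ of maximal dimension and let $\pi_i \colonequals \mathfrak{q}_i \cap \Ahat$. The finite integral extension $\Ahat/\pi_i \hookrightarrow \widehat{(B_0)_{\mathfrak{n}_i}}/\mathfrak{q}_i$ of complete local domains has equal Krull dimension, namely $\dim (B_0)_{\mathfrak{n}_i}$; if this equals $\dim A$, then the inequality $\mathrm{ht}(\pi_i) + \dim \Ahat/\pi_i \le \dim \Ahat$ forces $\mathrm{ht}(\pi_i) = 0$, so $\pi_i$ is a minimal prime of~$\Ahat$. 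To see the $\pi_i$ are pairwise distinct, I would use the separating elements $a_i \colonequals \prod_{j\ne i} b_j$, which are units in the $\mathfrak{n}_i$-factor of $\widehat{B_0}$ but lie in the maximal ideal of every other factor; applying Hensel's lemma to a monic equation for~$a_i$ over~$A$ produces an element of~$\Ahat$ landing in $\mathfrak{q}_j$ for $j \ne i$ but not in~$\mathfrak{q}_i$. The hard part is verifying $\dim (B_0)_{\mathfrak{n}_i} = \dim A$: when~$A$ is not normal, going-down for $A \hookrightarrow B_0$ can fail and some chosen maximal ideal of~$B_0$ may have height less than $\dim A$. The fix is to enlarge $B_0$ inside~$B$ so that each of the $s$ chosen maximal ideals attains height $\dim A$, using that the full normalization~$B$ has this property by the dimension theory of integral closures of noetherian local domains.
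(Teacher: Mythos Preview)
Your reduction to the domain case and the construction of a finite $A$-subalgebra $B_0 \subseteq B$ with $s$ distinct maximal ideals are correct, and this is a genuinely different route from the paper's proof, which simply cites Stacks Project Tags~0C24 and~0C28(1) (those results pass through the henselization $A^h$: the maximal ideals of $B$ biject with the minimal primes of $A^h$, and $\Ahat$ has at least as many minimal primes as $A^h$).

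There are, however, two real gaps in your execution. First, you yourself flag the key difficulty --- showing $\dim (B_0)_{\mathfrak{n}_i} = \dim A$ --- and propose to fix it by asserting that every maximal ideal of the full normalization $B$ has height $\dim A$; but you give no argument for this, and it is a nontrivial statement bound up with catenarity properties of $A$ (going-down for $A \hookrightarrow B$ is unavailable since $A$ is not assumed normal, and Nagata's non-catenary local domains show one cannot expect such height equalities for free). Second, the Hensel's-lemma step meant to separate the $\pi_i$ is only gestured at: it is unclear which monic polynomial over $A$ you factor in $\Ahat$, or why the resulting factor furnishes an element of $\Ahat$ (rather than of $\widehat{B_0}$) that distinguishes the contractions $\pi_i$.

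Both problems evaporate with one observation you have not used: $A$ and $B_0$ share the fraction field $K$, so
\[
   \Ahat \otimes_A K \;=\; (B_0 \otimes_A \Ahat) \otimes_{B_0} K \;=\; \widehat{B_0} \otimes_{B_0} K.
\]
By going-down for the flat maps $A \to \Ahat$ and $B_0 \to \widehat{B_0}$, the minimal primes of $\Ahat$ (respectively of $\widehat{B_0}$) are exactly the primes meeting $A$ (respectively $B_0$) in $(0)$ that are minimal with that property, hence biject with the minimal primes of this common localization. Therefore $\Ahat$ and $\widehat{B_0}$ have the \emph{same} number of minimal primes, and $\widehat{B_0} \simeq \prod_{\mathfrak{n}} \widehat{(B_0)_{\mathfrak{n}}}$ visibly has at least $s$. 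This bypasses both the dimension count and the distinctness argument.
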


\begin{proof}
Combine
\cite{StacksProject}*{\href{http://stacks.math.columbia.edu/tag/0C24}{Tag~0C24}}
and
\cite{StacksProject}*{\href{http://stacks.math.columbia.edu/tag/0C28}{Tag~0C28}(1)}.
\end{proof}

The following is well known; see \cite{StacksProject}*{\href{http://stacks.math.columbia.edu/tag/0BRA}{Tag~0BRA}} for a generalization.

\begin{lemma}
\label{L:Spec and purely inseparable extension}
Let $B$ a normal domain.
Let $L=\Frac B$.
Let $L'/L$ be a purely inseparable extension.
Let $B'$ be the integral closure of $B$ in $B'$.
Then $\Spec B' \to \Spec B$ is a homeomorphism.
In particular, $B$ and $B'$ have the same number of maximal ideals.
\end{lemma}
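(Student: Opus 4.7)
The plan is to reduce to positive characteristic and then exploit one structural observation. If $\Char L = 0$, pure inseparability forces $L' = L$, so $B' = B$ and there is nothing to prove; so assume $\Char L = p > 0$. The key observation is: \emph{for every $x \in B'$, some power $x^{p^n}$ lies in $B$.} Indeed, since $L'/L$ is purely inseparable, there exists $n \ge 0$ with $x^{p^n} \in L$; but $x^{p^n}$ is integral over $B$ (being a power of an integral element), so normality of $B$ forces $x^{p^n} \in B$.

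Armed with this, I would verify three properties of $\psi \colon \Spec B' \to \Spec B$. For surjectivity and closedness, I would simply invoke integrality of $B \subset B'$ (immediate from the definition of $B'$): lying-over gives surjectivity and going-up gives closedness, both standard for integral extensions. For injectivity, given $\pp' \in \Spec B'$ with $\pp \colonequals \pp' \cap B$, I would show
\[
    \pp' \;=\; \{\, x \in B' : x^{p^n} \in \pp \text{ for some } n \ge 0 \,\},
\]
which manifestly depends only on $\pp$. The inclusion $\supseteq$ is immediate from $\pp \subseteq \pp'$ together with primality of $\pp'$, and $\subseteq$ uses the key observation: for $x \in \pp'$ pick $n$ with $x^{p^n} \in B$, and then $x^{p^n} \in B \cap \pp' = \pp$.

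A continuous closed bijection is automatically a homeomorphism, which is the main claim. The ``in particular'' is then immediate, since maximal ideals are the closed points of a spectrum and homeomorphisms preserve closed points.

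I do not anticipate any real obstacle; this is essentially the standard Frobenius trick. The one subtle point worth flagging is that normality of $B$ is genuinely required for the key observation --- without it, an element $x^{p^n} \in L$ can be integral over $B$ without lying in $B$, and the entire argument collapses.
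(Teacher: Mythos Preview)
Your proof is correct and follows essentially the same Frobenius-radical approach as the paper: both identify the unique prime of $B'$ above $\pp$ as $\{x : x^{p^n} \in \pp \text{ for some } n\}$. Your use of going-up to obtain closedness is in fact more careful than the paper's appeal to quasi-compactness, but the core idea is identical.
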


\begin{proof}
We may assume that $p \colonequals \Char L > 0$.
The map $\Spec B \to \Spec B'$ sending $\pp$ to $\{x \in L' : x^{p^e} \in \pp \textup{ for some $e \ge 0$}\}$ is an inverse to $\Spec B' \to \Spec B$.
Thus $\Spec B' \to \Spec B$ is a continuous bijection between quasi-compact spaces, so it is a homeomorphism.
The final sentence follows since maximal ideals correspond to closed points.
\end{proof}

\begin{lemma} \label{L:structure of dvr}
  Let $m \ge 1$.
  Suppose that $\Char(k) = \Char(R)$.
  Then $R/\mm^m \simeq k[t]/\ideal{t^m}$.
  If $R$ is complete, then $R \simeq k\pws{t}$.
\end{lemma}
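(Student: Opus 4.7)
The plan is to construct a ring homomorphism $\sigma \colon k \hookrightarrow R/\mm^m$ (respectively $\sigma \colon k \hookrightarrow R$ when $R$ is complete) that is a section of the residue map, and then to extend it to a surjection from $k[t]$ (resp.\ $k\pws{t}$) by sending $t$ to $\pi$.

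The main obstacle is producing such a section~$\sigma$. This is Cohen's structure theorem on coefficient fields: in equal characteristic, every complete Noetherian local ring contains a subfield mapping isomorphically to its residue field (see \cite{StacksProject}*{\href{http://stacks.math.columbia.edu/tag/0323}{Tag~0323}}). I would cite this and apply it to $R$ in the complete case; for the Artinian quotient $R/\mm^m$, which is automatically $\mm$-adically complete, the same result supplies the section. A self-contained proof of the existence of $\sigma$ would be the genuinely difficult step and would split into the characteristic~$0$ case (lift a transcendence basis and use Hensel's lemma) and the characteristic~$p$ case (use a $p$-basis of $k$).

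Once $\sigma$ is fixed, consider the $k$-algebra map $\phi \colon k[t] \to R/\mm^m$ sending $t$ to $\pi + \mm^m$. I would prove surjectivity by induction: for each $j \le m$ and each $r \in R$, there exist $a_0,\ldots,a_{j-1} \in k$ with $r \equiv \sum_{i=0}^{j-1} \sigma(a_i) \pi^i \pmod{\mm^j}$; at each step, one picks $a_j$ so that $r - \sum_{i<j} \sigma(a_i)\pi^i \in \mm^j$ agrees with $\sigma(a_j) \pi^j$ modulo $\mm^{j+1}$, using $\mm^j/\mm^{j+1} \isom k$ via multiplication by $\pi^j$. The kernel of $\phi$ contains $t^m$ because $\pi^m \in \mm^m$, so $\phi$ factors through $k[t]/\ideal{t^m}$; both this quotient and $R/\mm^m$ have $k$-dimension $m$ (by the filtration of $R/\mm^m$ with graded pieces $\mm^i/\mm^{i+1} \isom k$), so the induced surjection $k[t]/\ideal{t^m} \surjects R/\mm^m$ is an isomorphism of $k$-vector spaces, hence of rings.

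For the complete case, extend $\phi$ to $\tilde\phi \colon k\pws{t} \to R$ by sending $\sum a_i t^i$ to the convergent sum $\sum \sigma(a_i) \pi^i \in R$, which is well defined because $R$ is $\mm$-adically complete and $\sigma(a_i)\pi^i \in \mm^i$. Surjectivity of $\tilde\phi$ follows by iterating the construction above: given $r \in R$, choose $a_0, a_1, a_2,\ldots \in k$ so that $r \equiv \sum_{i=0}^{n} \sigma(a_i)\pi^i \pmod{\mm^{n+1}}$ for every $n$; the power series $\sum a_i t^i \in k\pws{t}$ is then a preimage of $r$. Injectivity is immediate: $k\pws{t}$ is a one-dimensional domain and $\tilde\phi$ is nonzero (it sends $1 \mapsto 1$), so $\ker \tilde\phi$ is a prime ideal strictly contained in the maximal ideal, hence zero.
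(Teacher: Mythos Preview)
Your proof is correct and follows essentially the same approach as the paper: both invoke Cohen's theorem to produce a coefficient field in the (complete) local ring, then send $t \mapsto \pi$ and identify the kernel. The only cosmetic difference is that for the complete case the paper obtains $R \isom k\pws{t}$ by taking the inverse limit of the isomorphisms $R/\mm^m \isom k[t]/\ideal{t^m}$, whereas you apply Cohen directly to $R$ and build the map $k\pws{t} \to R$ by hand; both are fine.
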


\begin{proof}
  The ring $R/\mm^m$ is (trivially) a complete local ring as defined
  in~\cite{Cohen1946}. We have $\Char(k) = \Char(R/\mm^m)$,
  so by~\cite{Cohen1946}*{Thm.~9}, $k$ embeds into~$R/\mm^m$.
  Then the surjective homomorphism $k[t] \to R/\mm^m$ mapping~$t$ to~$\pi$ has
  kernel~$\ideal{t^m}$.
  Taking inverse limits gives $R \isom k\pws{t}$ if $R$ is complete.
\end{proof}

\section{Hypersurfaces with several singularities}
\label{S:several}

Let $0 \neq f \in R[x_0,\ldots,x_n]_d$ and set $H = \Proj(R[x_0,\ldots,x_n]/\ideal{f})$.

\begin{theorem}
\label{T:r isolated singularities}
If the space $(H_k)_{\sing}$ consists of $r$ closed points,
then $v(\Delta(f)) \ge r$.
\end{theorem}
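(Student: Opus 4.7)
If $H_K$ (with $K = \Frac R$) is singular, then $\Delta(f) = 0$ and $v(\Delta(f)) = \infty \ge r$; hence assume $H_K$ is smooth. Replacing $R$ by its completion $\Rhat$ is a weakly unramified extension, so it preserves both $v(\Delta(f))$ and $(H_k)_{\sing}$; we may therefore further assume that $R$ is complete.

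The strategy is to factor the discriminant in a suitable complete regular local ring. Let $(f_k) \in \Aff^N_R$ be the image of the closed point of $\Spec R$ under the section $\Spec R \to \Aff^N_R$ corresponding to $f$, and set $A \colonequals \calO_{\Aff^N_R,(f_k)}$. Then $A$ is a regular local noetherian ring of dimension $N+1$, so $\widehat{A}$ is a complete regular local ring, hence a UFD. The section $f$ extends to a local homomorphism $\widehat{f} \colon \widehat{A} \to R$ (using completeness of $R$), sending the image of $\Delta$ to $\Delta(f)$. Factor
\[
  \widehat{\Delta} \;=\; u \cdot \pi_1^{e_1} \cdots \pi_m^{e_m}
\]
in $\widehat{A}$, with $u \in \widehat{A}^\times$, pairwise non-associate primes $\pi_j$, and exponents $e_j \ge 1$. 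Since $\widehat{f}$ is local, each $\widehat{f}(\pi_j)$ lies in $\mm$, hence has valuation at least $1$, and we obtain
\[
  v(\Delta(f)) \;=\; \sum_{j=1}^{m} e_j \, v(\widehat{f}(\pi_j)) \;\ge\; m.
\]
It therefore suffices to prove $m \ge r$.

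The integer $m$ equals the number of minimal primes of $\widehat{A}/(\widehat{\Delta}) = \widehat{\calO_{D_R,(f_k)}}$. By Lemma~\ref{L:completion and normalization} applied to $\calO_{D_R,(f_k)}$, this is at least the number of maximal ideals of the integral closure $B$ of $(\calO_{D_R,(f_k)})_{\red}$. Since $(H_k)_{\sing}$ is zero-dimensional, $(f_k)$ lies in the open subscheme $\DfiniteR \subset D_R$, so Lemma~\ref{L:normalization of Dprime}\eqref{I:above Dfinite} applied to the normal noetherian domain $R$---combined with Lemma~\ref{L:Spec and purely inseparable extension} in the exceptional case $\Char k = 2$, $n$ odd---identifies $\Spec B$ with the localization of $\calH_{\sing,R}$ along $\varphi^{-1}(f_k)$ and matches the $r$ maximal ideals of $B$ with the $r$ closed points of $\varphi^{-1}(f_k) = (H_k)_{\sing}$. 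Hence $m \ge r$, completing the proof.

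The main subtlety is in the inseparable case $\Char k = 2$, $n$ odd, where $\calH_{\sing,R}$ is not the honest normalization of $D_R$ but only its integral closure in a purely inseparable degree-$2$ function field extension; Lemma~\ref{L:Spec and purely inseparable extension} is exactly what is needed to ensure that the count of maximal ideals is preserved, so that the bound $m \ge r$ holds uniformly. The valuation inequality itself is then a clean consequence of local factorization in the complete regular local ring $\widehat{A}$.
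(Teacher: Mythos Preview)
Your proof is correct and follows essentially the same route as the paper's: localize $\Aff^N_R$ at the closed point determined by $f$, pass to the completed regular local ring (a UFD), count the prime factors of $\Delta$ there via Lemma~\ref{L:completion and normalization}, and identify the maximal ideals of the normalization with the points of $(H_k)_{\sing}$ using Lemma~\ref{L:normalization of Dprime}\eqref{I:above Dfinite} together with Lemma~\ref{L:Spec and purely inseparable extension} in the inseparable case. The only cosmetic differences are that you dispose of the case $\Delta(f)=0$ and reduce to $R$ complete at the outset, whereas the paper simply maps $\OOhat_{\Aff^N_R,P} \to \Rhat$ at the end; and your sentence ``identifies $\Spec B$ with the localization of $\calH_{\sing,R}$'' is literally true only in the non-exceptional case (in the exceptional case that localization is $\Spec B'$, and Lemma~\ref{L:Spec and purely inseparable extension} is what transfers the count of maximal ideals from $B'$ to $B$)---but your closing paragraph shows you understand this.
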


\begin{proof}
The inequality is trivial if $r=0$, so assume $r>0$.

Let $P \in D_R(k)$ correspond to $H_k$,
so $\varphi^{-1}(P) = (H_k)_{\sing}$.
Since $R$ is regular,
the local rings $\OO_{\Aff^N_R,P}$ and $\OOhat_{\Aff^N_R,P}$ are regular too,
and hence factorial \cite{Auslander-Buchsbaum1959}*{Theorem~5}.
Since $\dim (H_k)_\sing = 0$, we have $P \in \DfiniteR(k)$ (notation as
in Section~\ref{S:general singular locus}).
Let $L=\kappa(D_R)$ and $L' = \kappa(\calH_{\sing,R})$.
By Lemma~\ref{L:normalization of Dprime}, $\DfiniteR$ is open in $D_R$,
and $\varphi^{-1}(\DfiniteR) \to (\DfiniteR)_{\red}$ is the normalization of $(\DfiniteR)_{\red}$ in the purely inseparable extension $L'/L$.

Localizing at $P$ on the target,
we obtain a morphism $\Spec B' \to \Spec A_{\red}$,
where $A \colonequals \OO_{\DfiniteR,P} = \OO_{D_R,P} = \OO_{\Aff^N_R,P}/\ideal{\Delta}$, 
and $B'$ is the integral closure of $A_{\red}$ in $L'$.
Define $\Ahat$ and $B$ as in Lemma~\ref{L:completion and normalization},
so $\Ahat \isom \OOhat_{\Aff^N_R,P}/\ideal{\Delta}$,
and $B$ is the integral closure of $A_{\red}$ in $L$.
The maximal ideals of $B'$
correspond to the points of $\varphi^{-1}(\DfiniteR)$ above $P$,
which are the $r$ points of $(H_k)_{\sing}$.
By Lemma~\ref{L:Spec and purely inseparable extension}, $B$ too has $r$ maximal ideals.
By Lemma~\ref{L:completion and normalization}, $\Ahat$ has at least $r$ minimal primes.
Their inverse images in $\OOhat_{\Aff^N_R,P}$
correspond to prime factors of $\Delta$ in this factorial ring,
so $\Delta = p_1 \cdots p_r q$,
for some $p_1,\ldots,p_r, q\in \OOhat_{\Aff^N_R,P}$
with each $p_i$ vanishing at $P$.
Evaluation at the coefficient tuple of $f$
defines a ring homomorphism $\OOhat_{\Aff^N_R,P} \to \Rhat$
sending $\Delta$ to $\Delta(f)$
and sending each $p_i$ into the maximal ideal of $\Rhat$,
so $v(\Delta(f)) \ge 1 + \cdots + 1 + 0 = r$.
\end{proof}

\section{Valuations of polynomial values}
\label{S:valuations}

\begin{lemma}
\label{L:projection}
Let $\rho \colon \Aff^\ell_k \to \Aff^n_k$ be a projection for some $\ell \ge n$.
Let $V \subset \Aff^\ell_k$ be a closed subscheme.
Then $\{a \in \Aff^n_k : \rho^{-1}(a) \subseteq V \}$ is closed in $\Aff^n_k$.
\end{lemma}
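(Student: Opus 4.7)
The plan is to identify the set in question as the complement of the image under $\rho$ of an open set, and then argue that $\rho$ is an open morphism. After a linear change of coordinates we may assume $\rho \colon \Aff^\ell_k = \Aff^n_k \times_k \Aff^{\ell-n}_k \to \Aff^n_k$ is the first projection. Set $U \colonequals \Aff^\ell_k \setminus V$; this is open. A point $a \in \Aff^n_k$ satisfies $\rho^{-1}(a) \subseteq V$ if and only if the fiber $\rho^{-1}(a)$ does not meet $U$, i.e., if and only if $a \notin \rho(U)$. Thus the set $\{a : \rho^{-1}(a) \subseteq V\}$ is the complement of $\rho(U)$, and it suffices to show $\rho(U)$ is open in $\Aff^n_k$.

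One way to finish is to invoke the general fact that a flat morphism of finite presentation is open (\cite{EGA-IV.III}*{Th\'eor\`eme~2.4.6} or the standard analog): $\rho$ corresponds on coordinate rings to the inclusion $k[x_1,\ldots,x_n] \hookrightarrow k[x_1,\ldots,x_\ell]$, which is free and hence flat. Alternatively, a self-contained argument runs as follows. Cover $U$ by basic opens $D(f_i) \subseteq \Aff^\ell_k$; it suffices to show $\rho(D(f))$ is open for each $f \in k[x_1,\ldots,x_\ell]$. Expanding $f = \sum_\alpha c_\alpha(x_1,\ldots,x_n)\, x_{n+1}^{\alpha_{n+1}} \cdots x_\ell^{\alpha_\ell}$ with $c_\alpha \in k[x_1,\ldots,x_n]$, the fiber $\rho^{-1}(a)$ meets $D(f)$ precisely when the restriction of $f$ to that fiber is a nonzero element of $\kappa(a)[x_{n+1},\ldots,x_\ell]$, equivalently when $c_\alpha(a) \ne 0$ for some $\alpha$. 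Hence $\rho(D(f)) = \bigcup_\alpha D(c_\alpha)$ is open.

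The statement is really just the fact that a projection of affine spaces is an open morphism, repackaged as the closedness of the locus of base points whose fiber lies in a given closed subscheme, so there is no substantive obstacle; the only thing to be careful about is the elementary translation between "fiber contained in $V$" and "fiber disjoint from $U$".
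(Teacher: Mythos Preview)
Your proof is correct and follows essentially the same approach as the paper: identify the set as the complement of $\rho(U)$ for $U = \Aff^\ell_k \setminus V$, and observe that $\rho$ is flat hence open. The paper's argument is just the one-line version of your first method; your additional elementary description of $\rho(D(f))$ is a nice bonus but not needed.
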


\begin{proof}
  Since $\rho$ is flat, $\rho$ is open, so $\rho(\Aff^n_k - V)$ is open;
  its complement is closed.
\end{proof}

\begin{definition}
  Let $H = \Spec(k[x_1, \ldots, x_n]/\ideal{f}) \subset \Aff^n_k$ be a hypersurface
  and let $a \in k^n$.
  Let $\mm_a$ be the maximal ideal of~$k[x_1, \ldots, x_n]$ corresponding to~$a$.
  Then $\mult_H(a)$ denotes the \defi{multiplicity} of~$a$ as a point on~$H$, i.e.,
  \[ \mult_H(a) = \max\{m \in \Z_{\ge 0} : f \in \mm_a^m\}. \]
\end{definition}

For $b \in R$, let $\bar{b}$ be its image in $k$.
Likewise, given $b \in R^n$, define $\bar{b} \in k^n$.
Fix a nonzero polynomial $\delta \in R[x_0,\ldots,x_n]$.
(Eventually $\delta$ will be $\Delta$.)
{}From now on, we assume that $k$ is infinite.

\begin{definition}
Define $\vmin_\delta \colon k^n \to \Z_{\ge 0} \union \{\infty\}$ by
\[
	\vmin_\delta(a) = \min\{v(\delta(b)) : b \in R^n \text{\ with\ } \bar{b} = a\}.
\]
\end{definition}

\begin{lemma}
  \label{L:Gauss norm}
  The integer $\min \{v(\delta(b)) : b \in R^n\}$
  equals the minimum of the valuations of the coefficients of $\delta$.
\end{lemma}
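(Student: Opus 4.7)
Let $m$ denote the minimum valuation among the coefficients of $\delta$. The plan is to establish the two inequalities separately; one is a ``Gauss norm'' type bound that holds for any valuation ring, and the other uses the assumption that $k$ is infinite.

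First, I would show $v(\delta(b)) \ge m$ for every $b \in R^n$. This is immediate from expanding $\delta(b)$ as a sum of monomials: each term is a coefficient of $\delta$ times a product of the $b_i$'s, which lie in $R$, so each term has valuation at least $m$, hence so does the sum. Taking the minimum over $b$ gives $\min_b v(\delta(b)) \ge m$.

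For the reverse inequality, factor $\delta = \pi^m \delta'$, where $\delta' \in R[x_0,\ldots,x_n]$ has at least one coefficient of valuation $0$; thus the reduction $\bar{\delta'} \in k[x_0,\ldots,x_n]$ is a nonzero polynomial. Since $k$ is infinite (as assumed in the paragraph preceding the lemma), a nonzero polynomial over $k$ cannot vanish on all of $k^n$, so there exists $a \in k^n$ with $\bar{\delta'}(a) \ne 0$. Choose any lift $b \in R^n$ of $a$; then $v(\delta'(b)) = 0$, so $v(\delta(b)) = m$. This exhibits a $b$ achieving the minimum, completing the proof.

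There is no real obstacle here; the only subtlety is the need for $k$ infinite to guarantee that a nonzero polynomial over $k$ has a non-vanishing value, and this hypothesis has been explicitly put in force just above the lemma statement.
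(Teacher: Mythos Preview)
Your proof is correct and follows essentially the same approach as the paper: reduce to the case where some coefficient is a unit (you do this by factoring out $\pi^m$, the paper by dividing), then use that $k$ is infinite to find a point where the reduction is nonzero and lift. The only difference is that you spell out the easy inequality $v(\delta(b)) \ge m$ explicitly, whereas the paper leaves it implicit.
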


\begin{proof}
By dividing by a power of $\pi$, we may assume that some coefficient is a unit.
The reduction $\bar{\delta} \in k[x_1,\ldots,x_n]$
is not the zero polynomial, and $k$ is infinite,
so $\bar{\delta}$ is nonvanishing at some point in $k^n$.
Lift the point to $b \in R^n$.
Then $v(\delta(b))=0$.
Thus both minima equal~$0$.
\end{proof}

\begin{corollary}
  \label{C:formula for vmin}
  For $b \in R^n$, the integer $\vmin_{\delta}(\bar{b})$
 equals the minimum of the valuations of the coefficients of $\delta(b+\pi x)$.
\end{corollary}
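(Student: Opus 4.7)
The plan is to reduce the corollary directly to Lemma~\ref{L:Gauss norm} via the change of variables $x \mapsto b + \pi x$, which translates the residue disk around $\bar b$ to the full residue disk around $0$.

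First I would rewrite the minimum defining $\vmin_\delta(\bar b)$. Every $b' \in R^n$ with $\overline{b'} = \bar b$ can be written uniquely as $b' = b + \pi c$ for some $c \in R^n$, so
\[
\vmin_\delta(\bar b) = \min\{v(\delta(b+\pi c)) : c \in R^n\}.
\]
Next I would introduce the polynomial $g(x) \colonequals \delta(b+\pi x) \in R[x_1,\ldots,x_n]$ and verify that $g \ne 0$. This follows because the substitution $x_i \mapsto b_i + \pi x_i$ extends to an automorphism of $\Frac(R)[x_1,\ldots,x_n]$, with inverse $x_i \mapsto (x_i - b_i)/\pi$, hence is injective on $R[x_1,\ldots,x_n]$, and $\delta \ne 0$ by hypothesis.

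With $g$ a nonzero polynomial in $R[x_1,\ldots,x_n]$ and $k$ infinite (our standing hypothesis from the start of the section), Lemma~\ref{L:Gauss norm} applies to $g$ and yields
\[
\min\{v(g(c)) : c \in R^n\} = \min\{v(\alpha) : \alpha \text{ a coefficient of } g\}.
\]
Since $g(c) = \delta(b+\pi c)$, combining this with the first display gives exactly the claimed equality. The proof is essentially a direct repackaging of Lemma~\ref{L:Gauss norm}, so I do not anticipate any real obstacle; the only thing to be careful about is the non-degeneracy of the substitution, which is handled by passing to $\Frac(R)$ as above.
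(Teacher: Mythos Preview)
Your argument is correct and is essentially the paper's proof, which simply says ``Apply Lemma~\ref{L:Gauss norm} to $\delta(b+\pi x)$''; you have merely spelled out the bijection $c \mapsto b+\pi c$ between $R^n$ and the residue disk, and checked that $\delta(b+\pi x)$ is nonzero so that the lemma applies.
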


\begin{proof}
  Apply Lemma~\ref{L:Gauss norm} to $\delta(b+\pi x)$.
\end{proof}

\begin{proposition}
\label{P:Greenberg}
The function $\vmin_\delta$ on $\Aff^n(k)$ is upper-semicontinuous with respect to the Zariski topology.
\end{proposition}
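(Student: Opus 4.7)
The plan is to prove upper-semicontinuity by checking, for every integer $N \ge 0$, that the set $S_N \colonequals \{a \in k^n : \vmin_\delta(a) \ge N\}$ is Zariski closed in $\Aff^n(k)$. First I would observe that $a \in S_N$ if and only if $\delta(b) \in \pi^N R$ for every lift $b \in R^n$ of $a$, and that this is a condition on $b$ modulo $\pi^N$. Writing $\rho_N \colon (R/\pi^N)^n \to k^n$ for reduction modulo $\pi$ and $X_N \subseteq (R/\pi^N)^n$ for the zero set of $\delta \bmod \pi^N$, this gives the reformulation
\[
  S_N = \{a \in k^n : \rho_N^{-1}(a) \subseteq X_N\}.
\]

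The main strategy is to realize $(R/\pi^N)^n$ as the $k$-points of a finite-type $k$-scheme $Y$ so that $\rho_N$ comes from a flat surjective morphism $\rho \colon Y \to \Aff^n_k$ and $X_N$ comes from a closed subscheme $V \subseteq Y$. Once this is set up, Lemma~\ref{L:projection} (whose proof really only uses flatness of $\rho$, not that it is literally a projection) shows that $\{a \in \Aff^n_k : \rho^{-1}(a) \subseteq V\}$ is closed in $\Aff^n_k$; and if the fibers of $\rho$ have Zariski-dense $k$-points, then the $k$-points of this closed subscheme coincide with $S_N$, completing the proof.

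The construction of $Y$ will split into two cases. In equal characteristic ($\Char R = \Char k$) I would use Weil restriction: replacing $R$ by its completion (harmless since $\vmin_\delta$ is determined by the quotients $R/\pi^N$) and using Lemma~\ref{L:structure of dvr} to fix an isomorphism $R \isom k\pws{t}$, the quotient $R/\pi^N$ becomes the $k$-algebra $k[t]/\ideal{t^N}$, free of rank $N$ over $k$. Then $Y = \Aff^{nN}_k$, $\rho$ is the linear projection onto the $n$ constant-term coordinates, and $V$ is cut out by the $N$ polynomial components of $\delta \bmod \pi^N$; each fiber of $\rho$ is an affine space. In mixed characteristic I would appeal to the Greenberg functor, which produces a smooth finite-type $k$-scheme $Y$ representing the functor $k' \mapsto (R'/\pi^N R')^n$ on field extensions $k' \supseteq k$ (for $R'$ a weakly unramified extension of $R$ with residue field $k'$), equipped with a smooth morphism $\rho \colon Y \to \Aff^n_k$ realizing $\rho_N$ and a closed subscheme $V \subseteq Y$ realizing $X_N$.

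The step I expect to be the main obstacle is verifying that the $k$-points in each fiber of $\rho$ are Zariski dense, in order to pass from the set-theoretic condition $\rho_N^{-1}(a) \subseteq X_N$ (which is what $S_N$ actually encodes) to the scheme-theoretic condition $\rho^{-1}(a) \subseteq V$ (which Lemma~\ref{L:projection} provides). In equal characteristic this is automatic because the fibers are affine spaces over the infinite field $k$. In mixed characteristic it requires inspecting the Greenberg fibers, which are again affine spaces over $k$ (corresponding to the filtration $\pi R/\pi^N R \supset \pi^2 R/\pi^N R \supset \cdots$ by powers of $\pi$, each successive quotient being a copy of $k$); their $k$-points are then dense since $k$ is infinite.
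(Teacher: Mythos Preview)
Your approach matches the paper's almost exactly: reformulate $\vmin_\delta(a) \ge m$ as a fiber-containment condition, then use Weil restriction in equal characteristic and the Greenberg functor in mixed characteristic, combined with Lemma~\ref{L:projection}. The paper makes the same observation you do, that the fibers of $\rho$ are affine spaces over the infinite field $k$, so the passage from the set-theoretic condition on $k$-points to the scheme-theoretic condition is legitimate.

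There is one genuine gap. The classical Greenberg functor requires the residue field $k$ to be \emph{perfect}, because its construction rests on Witt vectors. Your description of what the functor represents---``$k' \mapsto (R'/\pi^N R')^n$ for a weakly unramified extension $R'$ with residue field $k'$''---is not how the functor is actually built (such an $R'$ is not canonically determined by $k'$), and for imperfect $k$ you cannot simply invoke it. The paper handles this by splitting mixed characteristic into two subcases. When $k$ is perfect, the Greenberg functor applies directly. When $k$ is imperfect, one first passes via Lemma~\ref{L:extension of DVR} to a weakly unramified extension $R' \supset R$ whose residue field is the perfect closure $k'$ of $k$, applies the perfect case over $R'$ to obtain a closed subscheme $W' = \Spec(k'[x_1,\dots,x_n]/\ideal{f_1,\dots,f_r}) \subset \Aff^n_{k'}$, then replaces each $f_i$ by a suitable power $f_i^{p^e}$ so that the equations lie in $k[x_1,\dots,x_n]$ (without changing $W'(k')$), and finally uses Corollary~\ref{C:formula for vmin} to see that $\vmin_\delta$ is unchanged under the extension $R \subset R'$, so that $\{a \in k^n : \vmin_\delta(a) \ge m\} = k^n \cap W'(k') = W(k)$ for the resulting $k$-scheme $W$. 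You should either add this descent step or cite a version of the Greenberg functor valid over imperfect residue fields and verify it has the properties you need.
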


\begin{proof}
  We need to show that for $m \in \Z_{\ge 0}$,
  the set $\{a \in k^n : \vmin_\delta(a) \ge m \}$
  is $W(k)$ for some closed subscheme $W \subset \Aff^n_k$.
  Let $R_m = R/\mm^m$.

\emph{Case 1: $R$ is of equal characteristic.}
By Lemma~\ref{L:structure of dvr}, $R_m$ is a $k$-algebra
of vector space dimension $m$.
Applying restriction of scalars $\Res_{R_m/k}$ to
$\delta \colon \Aff^n_{R_m} \to \Aff^1_{R_m}$
produces a morphism $\Aff^{mn}_k \to \Aff^m_k$;
let $V$ be the fiber above~$0$.
The reduction map ${R_m}^n \to k^n$ arises from a morphism
$\rho \colon \Aff^{mn}_k \to \Aff^n_k$
that is a projection as in Lemma~\ref{L:projection}.
Let $W$ be a closed subscheme whose underlying space
is the closed subset of Lemma~\ref{L:projection}.
Then for $a \in k^n$, the following are equivalent
(note that $\rho^{-1}(a)$ is an affine space):
\[
	\vmin_{\delta}(a) \ge m, \qquad
	\rho^{-1}(a)(k) \subset V(k), \qquad
        \rho^{-1}(a) \subset V, \qquad
        a \in W(k).
\]

\emph{Case 2: $R$ is of mixed characteristic with perfect residue field $k$.}
In the previous argument,
replace $\Res_{R_m/k}$
with the Greenberg functor $\Gr^m$ from $R_m$-schemes to $k$-schemes;
see \citelist{\cite{Greenberg1961} \cite{Greenberg1963} \cite{Nicaise-Sebag2008}*{\S2.2} \cite{Bertapelle-Gonzalez-Aviles2018}}.

\emph{Case 3: $R$ is of mixed characteristic with imperfect residue field $k$.}
Let $k'$ be the perfect closure of $k$.
Use Lemma~\ref{L:extension of DVR} to find a weakly unramified extension
$R' \supset R$ with residue field $k'$.
Let $W' = \Spec(k'[x_1,\ldots,x_n]/\ideal{f_1,\ldots,f_r})$
be the closed subscheme for $R'$ in Case~2.
By replacing each $f_i$ by $f_i^{p^n}$ for some $n$,
we may assume that $f_i \in k[x_1,\ldots,x_n]$,
without changing $W'(k')$.
Let $W = \Spec(k[x_1,\ldots,x_n]/\ideal{f_1,\ldots,f_r})$.
By Corollary~\ref{C:formula for vmin},
$\vmin_\delta(a)$ is the same whether we work with $R$ or $R'$,
so $\{a \in k^n : \vmin_\delta(a) \ge m\} = k^n \intersect W'(k') = W(k)$.
\end{proof}

Let $V = \Spec(R[x_1,\ldots,x_n]/\ideal{\delta})$.
{}From now on, assume that some coefficient of $\delta$ is a unit,
so that $V_k$ is a hypersurface in $\Aff^n_k$.

\begin{lemma} \label{L:vmin and multiplicity}
  Let $a \in k^n$.
  Then $\vmin_\delta(a) \le \mult_{V_k}(a)$.
\end{lemma}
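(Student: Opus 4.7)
The plan is to apply Corollary~\ref{C:formula for vmin} to any lift $b \in R^n$ of~$a$ and read off $\vmin_\delta(a)$ from the Taylor expansion of $\delta$ about~$b$. Write
\[
  \delta(b+y) = \sum_\beta e_\beta\, y^\beta \in R[y_1,\ldots,y_n]
\]
as a formal multinomial identity (which makes sense over any ring, independent of the characteristic). Substituting $y = \pi x$ yields $\delta(b + \pi x) = \sum_\beta e_\beta\, \pi^{|\beta|}\, x^\beta$, so by Corollary~\ref{C:formula for vmin} it suffices to exhibit a single multi-index $\beta$ with $|\beta| + v(e_\beta) \le m$, where $m \colonequals \mult_{V_k}(a)$.

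Reducing the Taylor expansion modulo~$\pi$ gives $\bar{\delta}(a + y) = \sum_\beta \bar{e}_\beta\, y^\beta$ in $k[y_1,\ldots,y_n]$. Via the $k$-algebra automorphism of $k[x_1,\ldots,x_n]$ sending $x_i \mapsto x_i + a_i$, the condition $\bar{\delta} \in \mm_a^m \setminus \mm_a^{m+1}$ defining~$m$ translates into $\bar{\delta}(a+y) \in \ideal{y_1,\ldots,y_n}^m \setminus \ideal{y_1,\ldots,y_n}^{m+1}$. In particular there exists $\beta$ with $|\beta| = m$ and $\bar{e}_\beta \ne 0$, i.e., $v(e_\beta) = 0$. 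For this~$\beta$, the coefficient $e_\beta\, \pi^{|\beta|}$ of $x^\beta$ in $\delta(b + \pi x)$ has valuation exactly~$m$, so $\vmin_\delta(a) \le m$.

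I foresee no real obstacle; the proof is essentially a one-paragraph computation. The one mild point worth emphasizing is to work with the formal multinomial expansion of $\delta(b+y)$ over~$R$ rather than with ordinary partial derivatives, since in positive characteristic the latter need not detect multiplicity, whereas the Taylor coefficients $e_\beta$ (essentially the Hasse derivatives $\partial^{[\beta]}\delta$ evaluated at~$b$) do so automatically.
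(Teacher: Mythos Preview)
Your proof is correct and is essentially the same as the paper's. The only cosmetic difference is that the paper first translates to assume $a=0$, so that the Taylor expansion $\delta(b+y)=\sum_\beta e_\beta y^\beta$ becomes simply $\delta$ itself and the relevant $e_\beta$ are just the degree-$m$ coefficients of~$\delta$; otherwise the argument via Corollary~\ref{C:formula for vmin} is identical.
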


\begin{proof}
  Without loss of generality, $a=0$.
  Let $m=\mult_{V_k}(0)$.
  Some degree $m$ monomial in $\delta(x)$ has a unit coefficient,
  so some degree $m$ monomial in $\delta(\pi x)$ has valuation $m$.
  On the other hand, $\vmin_\delta(0)$ is the minimum of the valuations of $\delta(\pi x)$, so it is at most $m$.
\end{proof}

\begin{proposition} \label{P:Vk2}
  Let $a \in k^n$.
  Then $\vmin_\delta(a) \ge 2$ if and only if $a \in (V_k)_{\sing}$ and $a$ is in the image of the reduction map $V(R/\mm^2) \to V(k)$.
\end{proposition}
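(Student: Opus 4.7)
The plan is to translate $\vmin_{\delta}(a) \ge 2$ into a statement about Taylor coefficients via Corollary~\ref{C:formula for vmin}, and then read off the two required conditions directly. Without loss of generality take $a = 0$ (translate coordinates). Pick any lift $b \in R^n$ of $a$; by Corollary~\ref{C:formula for vmin}, $\vmin_{\delta}(a)$ equals the minimum of the valuations of the coefficients of
\[
\delta(b + \pi x) \;=\; \delta(b) \;+\; \pi \sum_{i=1}^{n} \frac{\partial \delta}{\partial x_i}(b)\, x_i \;+\; \pi^{2} g(x),
\]
where $g \in R[x_1,\ldots,x_n]$ collects the higher Taylor terms, each of which carries a factor $\pi^{k}$ with $k \ge 2$.

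Reading off the valuations of the constant, linear, and higher-order coefficients, the condition $\vmin_{\delta}(a) \ge 2$ is equivalent to the conjunction
\[
v(\delta(b)) \ge 2 \qquad \text{and} \qquad v\!\bigl(\tfrac{\partial \delta}{\partial x_i}(b)\bigr) \ge 1 \text{ for all } i.
\]
The second condition says precisely that $\tfrac{\partial \bar{\delta}}{\partial x_i}(a) = 0$ for every $i$; together with $\bar{\delta}(a) = 0$ (forced by $v(\delta(b)) \ge 2 \ge 1$), this is exactly the assertion that $a \in (V_k)_{\sing}$. The first condition says precisely that $b \bmod \mm^{2}$ satisfies $\delta \equiv 0 \pmod{\mm^{2}}$, i.e., defines a point of $V(R/\mm^{2})$ lying over $a$, equivalently that $a$ is in the image of the reduction map $V(R/\mm^{2}) \to V(k)$.

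The only logical nuance is that $\vmin_{\delta}(a)$ is the infimum over \emph{all} lifts $b$, whereas the analysis above was carried out for a single chosen lift. This is not a real obstacle: once $a \in (V_k)_{\sing}$, the quantity $\delta(b) \bmod \pi^{2}$ is independent of the chosen lift, because replacing $b$ by $b + \pi c$ changes $\delta(b)$ by $\pi \sum_i \tfrac{\partial \delta}{\partial x_i}(b)\, c_i + \pi^{2}(\cdots)$, and the first sum now lies in $\pi^{2} R$. Hence the existence of one lift with $v(\delta(b)) \ge 2$ is equivalent to every lift satisfying this, which in turn is equivalent to $\vmin_{\delta}(a) \ge 2$. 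The whole argument is really just the elementary Taylor expansion combined with Corollary~\ref{C:formula for vmin} and the definition of $V(R/\mm^{2})$; there is no substantive obstacle.
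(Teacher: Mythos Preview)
Your proof is correct and follows essentially the same route as the paper: both reduce to $a=0$, apply Corollary~\ref{C:formula for vmin}, and read off the condition $\vmin_\delta(0)\ge 2$ as ``constant term has valuation $\ge 2$ and linear coefficients have valuation $\ge 1$.'' The only cosmetic difference is that the paper takes the specific lift $b=0$, so the relevant quantities are literally the low-order coefficients $r,s_i$ of $\delta$, whereas you keep an arbitrary lift $b$ and then add the (correct) observation that $\delta(b)\bmod\pi^2$ is independent of the lift once $a\in(V_k)_{\sing}$; this extra step is not needed if one simply chooses $b=0$ from the start.
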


\begin{proof}
  By shifting, we may assume $a=0$.
  Write $\delta(x) = r + \sum_{i=1}^n s_i x_i + \dots$.
The following are equivalent:
\begin{itemize}
\item $\vmin_\delta(0) \ge 2$;
\item the minimum of the valuations of the coefficients of $\delta(\pi x)$ is at least $2$ \\
  (see Corollary~\ref{C:formula for vmin});
\item $v(r) \ge 2$ and $v(s_i) \ge 1$ for all $i$.
\end{itemize}
The last conditions imply that $0 \in V(R/\mm^2)$ and $0 \in (V_k)_{\sing}$.
Conversely, if $0 \in (V_k)_\sing$,
then $v(r) \ge 1$ and $v(s_i) \ge 1$ for all $i$,
and if moreover $0$ is the image of some $b_2 \in V(R/\mm^2)$,
then we may lift $b_2$ to $b \in (\pi R)^n$ with $v(\delta(b)) \ge 2$,
which is equivalent to $v(r) \ge 2$ since $v(s_i) \ge 1$ for all $i$.
\end{proof}

\section{Minimal valuations of the discriminant}
\label{S:vminDelta}

In this section, we assume that $k$ is algebraically closed.
Recall the definitions of $\Delta$ and $D$ from Section~\ref{S:discriminant}.
We apply the results of the previous section with $\delta \colonequals \Delta$.
For $a \in k^N$, let $f_a \in k[x_0,\ldots,x_n]_d$ be the polynomial with coefficients given by $a$, and let $H_a = H_{f_a} \subset \PP^n_k$.

By Theorem~\ref{T:r isolated singularities},
if $(H_a)_\sing$ consists of $r$ isolated points,
then $\vmin_\Delta(a) \ge r$.

\begin{lemma} \label{L:vmin of Delta}
  Fix $b \in R^N$.
  \begin{enumerate}[\upshape(a)]
    \item \label{I:vmin of Delta,1}
          Let $V \subset \Aff^N_k$ be a variety such that $\bar{b} \in V(k)$.
          If $\{a \in V(k) : \vmin_\Delta(a) \ge m \}$ is Zariski dense in~$V$, then $v(\Delta(b)) \ge m$.
    \item \label{I:vmin of Delta,2}
          If there exists $a \in k^N$ such that $(H_a)_\sing$ is finite and contains $r$ distinct points $P_1, \ldots, P_r$
          that are also singularities of~$H_{\bar{b}}$, then $v(\Delta(b)) \ge r$.
  \end{enumerate}
\end{lemma}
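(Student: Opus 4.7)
For part~\eqref{I:vmin of Delta,1}, the plan is to invoke Proposition~\ref{P:Greenberg}: the set $S_m \colonequals \{a \in \Aff^N(k) : \vmin_\Delta(a) \ge m\}$ is Zariski closed. Its intersection with the irreducible variety~$V$ is therefore a closed subset of~$V$ whose $k$-points are by hypothesis dense in~$V$; since $k$ is algebraically closed and $V$ is irreducible, this forces $S_m \cap V = V$. Hence $\vmin_\Delta(\bar{b}) \ge m$, and the conclusion $v(\Delta(b)) \ge m$ is immediate from the definition of $\vmin_\Delta$.

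For part~\eqref{I:vmin of Delta,2}, the plan is to reduce to part~\eqref{I:vmin of Delta,1} by producing an appropriate variety~$V$. I will take $L \subset \Aff^N_k$ to be the closed subscheme defined by requiring, for each $i = 1, \ldots, r$, that $f_c$ and all its partial derivatives $\del f_c/\del x_j$ vanish at~$P_i$. Since $f_c$ depends linearly on its coefficients~$c$, each of these is a linear condition on~$c$, so $L$ is a linear subspace of $\Aff^N_k$ and hence an irreducible variety. By the hypothesis of~\eqref{I:vmin of Delta,2}, both $a$ and $\bar{b}$ lie in $L(k)$. Moreover, $L \subset D_k$, because any $c \in L$ gives a hypersurface singular at~$P_1$.

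The next step is to show that $\vmin_\Delta \ge r$ on a Zariski-dense subset of~$L$. By Lemma~\ref{L:normalization of Dprime}\eqref{I:D1 and Dfinite are open}, $\Dfinite$ is open in $D$, so $L \cap \Dfinite$ is open in~$L$; since $a$ lies in it, it is non-empty and therefore (by irreducibility of~$L$) dense in~$L$. For every $c \in L \cap \Dfinite$, the singular locus $(H_c)_\sing$ is finite and still contains the $r$ distinct points $P_1, \ldots, P_r$, so the observation recorded just before this lemma---an immediate consequence of Theorem~\ref{T:r isolated singularities} applied to any lift of $c$ to $R^N$---gives $\vmin_\Delta(c) \ge r$. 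Applying part~\eqref{I:vmin of Delta,1} with $V \colonequals L$ and this value of~$m$ then yields $v(\Delta(b)) \ge r$.

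I do not anticipate any substantive obstacle: the argument merely assembles Proposition~\ref{P:Greenberg}, Lemma~\ref{L:normalization of Dprime}, and Theorem~\ref{T:r isolated singularities}. The only points worth double-checking are that the conditions cutting out~$L$ really are linear in~$c$ (so that $L$ is an irreducible variety) and that $L \subset D_k$; both follow immediately from the definitions.
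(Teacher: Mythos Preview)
Your proof is correct and follows essentially the same approach as the paper. The only cosmetic difference is in part~\eqref{I:vmin of Delta,2}: the paper takes $V$ to be the line through $\bar{b}$ and $a$ (handling the case $\bar{b}=a$ separately), whereas you take $V=L$ to be the full linear subspace of hypersurfaces singular at all~$P_i$; both choices work for the same reason, and the remaining steps (openness of $\Dfinite$, Theorem~\ref{T:r isolated singularities}, then part~\eqref{I:vmin of Delta,1}) are identical.
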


\begin{proof} \strut
  \begin{enumerate}[\upshape(a)]
  \item By Proposition~\ref{P:Greenberg}, $\{a \in V(k) : \vmin_\Delta(a) \ge m \} = V(k) \ni \bar{b}$, so $v(\Delta(b)) \ge m$.
  \item If $\bar{b}=a$, then $v(\Delta(b)) \ge \vmin_{\Delta}(a) \ge r$ by Theorem~\ref{T:r isolated singularities}.
    If $\bar{b} \ne a$, let $V \subset \Aff^N_k$ be the line joining $\bar{b}$ and~$a$.
Since the condition that a given point
$P \in \PP^{n}$ is a singular point of a hypersurface~$H$ is linear in
the coefficients of the polynomial defining~$H$, all points $c \in V(k)$
will have the property that $\{P_1, \ldots, P_r\} \subset (H_c)_\sing$.

By Lemma~\ref{L:normalization of Dprime}\eqref{I:D1 and Dfinite are open},
``$\dim (H_c)_\sing \ge 1$'' is a closed condition,
so for all but finitely many $c \in V(k)$,
we have $\dim (H_c)_\sing = 0$, so $(H_c)_{\sing}$ is a finite set containing $P_1,\ldots,P_r$.
Theorem~\ref{T:r isolated singularities} implies that $\vmin_\Delta(c) \ge r$ for all these points.
The claim now follows from part~\eqref{I:vmin of Delta,1}.  \qedhere
  \end{enumerate}
\end{proof}

\begin{lemma} \label{L:surjective reduction on D}
  The reduction map $D(R) \to D(k)$ is surjective.
\end{lemma}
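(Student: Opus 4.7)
The plan is to lift $a$ through the vector bundle $\calH_{\sing} \to \PP^n$ from Section~\ref{S:discriminant}, rather than attempting to lift in $D$ directly. Since $a \in D(k)$, the hypersurface $H_a \subset \PP^n_k$ is singular, and because $k$ is algebraically closed (the blanket hypothesis of this section), one can choose a $k$-rational singular point $Q \in (H_a)_{\sing}(k)$, yielding a $k$-point $(Q,a)$ of $\calH_{\sing}$. It then suffices to lift $(Q,a)$ to an $R$-point of $\calH_{\sing}$ and project to $D$.

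To execute this lift, I would first lift $Q \in \PP^n(k)$ to some $\tilde Q \in \PP^n(R)$, which is possible because $\PP^n(R) \surjects \PP^n(k)$ (choose an affine chart containing $Q$ and lift coordinates arbitrarily). Next, pulling back the rank $N-n-1$ vector bundle $\calH_{\sing,R} \to \PP^n_R$ along $\tilde Q \colon \Spec R \to \PP^n_R$ gives a vector bundle on $\Spec R$; since $R$ is a PID, this bundle is free, so the fiber is isomorphic to $\Aff^{N-n-1}_R$. Its special fiber is the affine $k$-space of coefficient tuples of degree-$d$ hypersurfaces singular at $Q$, which contains $a$ by construction. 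Lifting $a$ to an $R$-point $b$ of this fiber—possible since $\Aff^{N-n-1}(R) \surjects \Aff^{N-n-1}(k)$—produces $(\tilde Q,b) \in \calH_{\sing}(R)$ reducing to $(Q,a)$. Projecting to $\Aff^N$ then gives the required $b \in D(R)$ with $\bar b = a$.

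The only substantive point to verify is that the second projection $\calH_{\sing} \to \Aff^N$ factors through the closed subscheme $D$ at the scheme level, so that the $R$-point $(\tilde Q,b)$ lands in $D(R)$. This follows because $\calH_{\sing}$ is integral (indeed smooth over $\Z$) with set-theoretic image $D$ in $\Aff^N$, and $D$ is cut out by the irreducible polynomial $\Delta$, so the pullback of $\Delta$ to $\calH_{\sing}$ must vanish. Beyond this small check, the argument is formal, and I do not foresee any further obstacle.
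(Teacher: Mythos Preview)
Your argument is correct and is essentially the same approach as the paper's: both proofs pick a singular $k$-point $Q$, lift it to $\PP^n(R)$, and then lift $a$ within the linear space of hypersurfaces singular at that point. The paper just does this concretely by moving $Q$ to $(1:0:\cdots:0)$ and observing that singularity there is the vanishing of specific coefficients, whereas you phrase the same lift via the vector-bundle structure of $\calH_{\sing}\to\PP^n$.
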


\begin{proof}
  Let $a \in D(k)$; then $H_a$ has a singular point~$P \in \PP^n(k)$.
  We may assume that $P=(1:0:\cdots:0)$.
  The condition that $H_a$ is singular at~$P$ is given by the vanishing
  of certain coordinates. Lift $a \in k^N$ to some~$b \in R^N$
  so that these coordinates remain zero.
\end{proof}

\begin{corollary} \label{C:description of D2}
 Let $a \in k^N$.
  Then $\vmin_\Delta(a) \ge 2$ if and only if $a \in (D_k)_{\sing}$.
\end{corollary}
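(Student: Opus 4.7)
The plan is to combine Proposition~\ref{P:Vk2} with Lemma~\ref{L:surjective reduction on D}. I would apply Proposition~\ref{P:Vk2} with $\delta \colonequals \Delta$; the hypothesis that some coefficient of~$\delta$ is a unit is satisfied because $\Delta \in \Z[\{a_\ii\}]$ is irreducible, so the gcd of its integer coefficients is~$1$, and therefore at least one coefficient is not in~$\mm$. In this setup the scheme $V \colonequals \Spec(R[\{a_\ii\}]/\ideal{\Delta})$ is $D_R$, and its special fiber $V_k$ is the hypersurface $D_k \subset \Aff^N_k$. Proposition~\ref{P:Vk2} then yields the equivalence
\[
  \vmin_\Delta(a) \ge 2 \iff a \in (D_k)_{\sing} \text{ and } a \in \im\bigl(D(R/\mm^2) \to D(k)\bigr).
\]

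The forward direction of the corollary is then immediate from the ``only if'' direction of the above equivalence. For the reverse direction, the task is to eliminate the second condition. I would invoke Lemma~\ref{L:surjective reduction on D}, which asserts that $D(R) \to D(k)$ is surjective; composing with the natural reduction $D(R) \to D(R/\mm^2)$ shows that $D(R/\mm^2) \to D(k)$ is surjective as well. Hence every $a \in (D_k)_{\sing} \subset D(k)$ automatically lifts to $D(R/\mm^2)$, so both conditions on the right-hand side of the equivalence are met, and we conclude $\vmin_\Delta(a) \ge 2$.

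There is no serious obstacle: the statement is a bookkeeping corollary stitching together the hypersurface-level criterion in Proposition~\ref{P:Vk2} with the lifting fact in Lemma~\ref{L:surjective reduction on D}. The only minor point to record is the verification that some coefficient of~$\Delta$ is a unit in~$R$, so that Proposition~\ref{P:Vk2} is applicable.
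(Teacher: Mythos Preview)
Your proposal is correct and follows essentially the same approach as the paper's proof: apply Proposition~\ref{P:Vk2} with $\delta = \Delta$, then use Lemma~\ref{L:surjective reduction on D} to guarantee the lifting condition. Your explicit verification that some coefficient of~$\Delta$ is a unit in~$R$ is a nice piece of bookkeeping that the paper leaves implicit.
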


\begin{proof}
By Lemma~\ref{L:surjective reduction on D}, every $a \in D(k)$ is in the image of~$D(R/\mm^2)$.
Apply Proposition~\ref{P:Vk2} to $\delta = \Delta$.
\end{proof}

We now prove a variant of Theorem~\ref{T:r isolated singularities},
in which the $r$ singularities need not be isolated,
but they must be linearly independent.

\begin{lemma} \label{L:mult 2 if sing at 2 points}
  Let $P_1, \ldots, P_r \in \PP^n(k)$ be points that span a~$\PP^{r-1}$ and let
  $a_0 \in k^N$ be such that $H_{a_0}$ is singular at $P_1, \ldots, P_r$.
  Then $\vmin_\Delta(a_0) \ge r$ and $\mult_{D_k}(a_0) \ge r$.
\end{lemma}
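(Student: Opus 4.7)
By Lemma~\ref{L:vmin and multiplicity} (applied with $\delta = \Delta$), we have $\vmin_\Delta(a_0) \le \mult_{D_k}(a_0)$, so both conclusions of the lemma reduce to proving $\vmin_\Delta(a_0) \ge r$. My plan splits on $d$.

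For $d = 2$: the singular locus of any quadric hypersurface in $\PP^n$ is a linear subspace, so $(H_{a_0})_\sing$ contains the $\PP^{r-1}$ spanned by $P_1, \ldots, P_r$. Thus for every lift $b \in R^N$ of $a_0$, the special fiber satisfies $\dim(H_k)_\sing \ge r - 1$, and Proposition~\ref{P:quadratic form}\eqref{I:inequality for quadratic form} yields $v(\Delta(f_b)) \ge r$; taking the minimum over $b$ gives $\vmin_\Delta(a_0) \ge r$.

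For $d \ge 3$: I would apply Lemma~\ref{L:vmin of Delta}\eqref{I:vmin of Delta,2}, which only asks for the existence of some $a \in k^N$ such that $(H_a)_\sing$ is finite and contains $P_1, \ldots, P_r$. Let $L \subset \Aff^N_k$ be the linear subspace of $f$ with $H_f$ singular at every $P_i$; by Lemma~\ref{L:normalization of Dprime}\eqref{I:D1 and Dfinite are open}, $L \cap \Dfinite$ is open in $L$, so it suffices to show this intersection is nonempty. I would establish this by a Bertini-type dimension count on the incidence variety
\[
  \calW \colonequals \{ (f,P) \in L \times (\PP^n \setminus \{P_1, \ldots, P_r\}) : P \in (H_f)_\sing \};
\]
the fiber of $\calW \to \PP^n$ over a generic $P$ should have codimension $n+1$ in $L$, since ``singular at $P$'' imposes $n+1$ linear conditions on coefficients, generically independent of those cutting out $L$. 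This gives $\dim \calW \le \dim L - 1$, so $\calW \to L$ is not dominant, and a generic $f \in L$ has $(H_f)_\sing \subseteq \{P_1, \ldots, P_r\}$, hence finite. Edge cases where $\dim L$ is too small for the count to succeed --- most notably $L = \{0\}$, which forces $a_0 = 0$ --- are handled by homogeneity of $\Delta$: $\vmin_\Delta(0) \ge \deg \Delta = (n+1)(d-1)^n \ge n+1 \ge r$, where the last inequality uses $r \le n+1$.

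The main obstacle will be making the Bertini-type dimension count in Case 2 fully rigorous: I need to verify that outside some proper closed subset of $\PP^n$ the singularity conditions at $P$ are truly linearly independent of those defining $L$ (a polynomial-interpolation statement that is where $d \ge 3$ enters), and to dispose of all the low-dimensional edge cases cleanly.
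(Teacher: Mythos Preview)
Your reduction to $\vmin_\Delta(a_0) \ge r$ via Lemma~\ref{L:vmin and multiplicity} is valid, and your treatment of $d=2$ matches the paper's exactly.

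For $d \ge 3$, however, you take a substantial detour where the paper uses a direct two-line argument. After placing the $P_j$ at coordinate points $e_0,\ldots,e_{r-1}$, the condition ``$H_a$ singular at $P_j$'' is precisely the vanishing of the $n+1$ coefficients of $x_{j-1}^d$ and $x_{j-1}^{d-1} x_i$ for $i \ne j-1$; call this set of coordinates $S_j$. Since $\Delta$ vanishes whenever all coordinates in $S_j$ do, every monomial of $\Delta$ is divisible by some element of $S_j$. Because $d-1 \ge 2$, the sets $S_1,\ldots,S_r$ are pairwise disjoint, so every monomial of $\Delta$ is divisible by a product of one coordinate from each $S_j$. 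This gives
\[
  \Delta \in \ideal{S_1} \cdots \ideal{S_r},
\]
from which both $\mult_{D_k}(a_0) \ge r$ and $\vmin_\Delta(a_0) \ge r$ follow immediately (each factor contributes valuation $\ge 1$ at any lift of $a_0$). No Bertini argument, no appeal to Theorem~\ref{T:r isolated singularities} or Lemma~\ref{L:vmin of Delta} is needed.

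Your route through Lemma~\ref{L:vmin of Delta}\eqref{I:vmin of Delta,2} requires producing some $a \in L$ with $(H_a)_\sing$ finite. You correctly flag this as the main obstacle, and it is a real gap: your incidence-variety dimension count needs the $(n+1)$ singularity conditions at a generic $P$ to be linearly independent of those defining $L$, and you do not verify this. The paper's own interpolation result in this direction, Lemma~\ref{L:dense}, only covers $r \le (d-1)/2$, which is much weaker than the $r \le n+1$ you need here (e.g.\ it says nothing about $d=3$, $r=n+1$). Your handling of $L=\{0\}$ via homogeneity is fine, but that does not cover the intermediate cases where $L$ is nonzero yet too small for a naive dimension count. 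The claim is almost certainly true, but establishing it is genuinely more work than the one-paragraph monomial argument that the paper uses instead.
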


\begin{proof}
  We can assume the $P_j$ to be coordinate points.
  Since $\Delta$ vanishes on $a \in k^N$ when $H_a$ is singular in~$P_j$,
  each term in~$\Delta$ must be divisible by one of the coordinates that
  describe the vanishing of $a$ and its first partial derivatives at~$P_j$.
  When the degree~$d$ is at least~$3$, then these
  sets of coordinates are disjoint in pairs, and so
  \[ \Delta  \in \ideal{a(P_1), \nabla a(P_1)} \cdots \ideal{a(P_r), \nabla a(P_r)} . \]
  This implies that $\mult_{D_k}(a_0) \ge r$ and also that $\vmin_\Delta(a_0) \ge r$.

  The result is still valid when $d = 2$.
  In this case, the associated reduced subscheme of $(H_{a_0})_{\sing}$ is a linear space, so the $\PP^{r-1}$ spanned by $P_1,\ldots,P_r$ is contained in $(H_{a_0})_{\sing}$.
Then by Proposition~\ref{P:quadratic form}\eqref{I:inequality for quadratic form},
$\vmin_\Delta(a_0) \ge r$.
By Lemma~\ref{L:vmin and multiplicity},
 $\mult_{D_k}(a_0) \ge \vmin_\Delta(a_0) \ge r$.
\end{proof}

Lemma~\ref{L:mult 2 if sing at 2 points} generalizes 
Proposition~\ref{P:quadratic form}\eqref{I:inequality for quadratic form}
to forms of arbitrary degree.

For a subset $X \subset \PP^n(k)$, let $\Span X$ be the smallest linear subspace of $\PP^n$ containing $X$.
If $X=\emptyset$, use the conventions $\Span X \isom \PP^{-1} = \emptyset$ and $\dim \Span X = -1$.

\begin{corollary} \label{C:lower bound in terms of dim sing}
  Let $b \in R^N$. Then $v(\Delta(b)) \ge \dim \Span((H_{\bar b})_\sing(k)) + 1$.
\end{corollary}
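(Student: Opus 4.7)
The plan is to reduce this almost immediately to Lemma~\ref{L:mult 2 if sing at 2 points} together with the trivial inequality relating $v(\Delta(b))$ to $\vmin_\Delta(\bar b)$.

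Set $s = \dim \Span((H_{\bar b})_\sing(k))$. If $(H_{\bar b})_\sing(k) = \emptyset$, then by convention $s = -1$ and the claim $v(\Delta(b)) \ge 0$ is trivial. Otherwise let $r = s+1 \ge 1$, and first I would choose points $P_1,\ldots,P_r \in (H_{\bar b})_\sing(k)$ that are linearly independent and span $\Span((H_{\bar b})_\sing(k)) \cong \PP^{r-1}$; such points exist by a standard dimension argument in projective space over the algebraically closed field $k$ (pick $P_1$ arbitrarily, then inductively pick $P_{i+1}$ outside the linear span of the previous points, which is possible since the $P_j$ cannot all lie in a proper subspace of $\Span((H_{\bar b})_\sing(k))$).

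Next I would apply Lemma~\ref{L:mult 2 if sing at 2 points} with $a_0 \colonequals \bar b$: since $H_{\bar b}$ is singular at each $P_j$ and the $P_j$ span a $\PP^{r-1}$, the lemma yields $\vmin_\Delta(\bar b) \ge r$. Finally, by the very definition of $\vmin_\Delta$, we have
\[
  v(\Delta(b)) \ge \vmin_\Delta(\bar b) \ge r = \dim \Span((H_{\bar b})_\sing(k)) + 1,
\]
which is the desired inequality.

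There is no real obstacle: the work has been absorbed into Lemma~\ref{L:mult 2 if sing at 2 points}, whose proof handled the separate cases $d \ge 3$ (via the factorization of $\Delta$ into ideals cut out by the vanishing at each $P_j$) and $d = 2$ (via Proposition~\ref{P:quadratic form}\eqref{I:inequality for quadratic form}). The only point to be careful about is the degenerate case of an empty $k$-singular locus, which is immediate.
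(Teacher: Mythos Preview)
Your proof is correct and follows the same approach as the paper: pick $r$ linearly independent singular points and apply Lemma~\ref{L:mult 2 if sing at 2 points}. The paper's proof is terser (it omits the empty case and the explicit mention of the inequality $v(\Delta(b)) \ge \vmin_\Delta(\bar b)$), but the argument is identical.
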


\begin{proof}
  Let $r = \dim \Span((H_{\bar b})_\sing(k)) + 1$.
  Choose $P_1,\ldots,P_r \in (H_{\bar b})_\sing(k)$ that span a~$\PP^{r-1}$.
  Now apply Lemma~\ref{L:mult 2 if sing at 2 points}.
\end{proof}

We will obtain better lower bounds in Section~\ref{S:positive-dimensional}.

\begin{corollary} \label{C:vDelta=1 then one singularity}
  Let $b \in R^N$ such that $v(\Delta(b)) = 1$. Then $(H_{\bar b})_\sing$
  consists of a single point.
\end{corollary}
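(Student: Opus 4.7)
The plan is to apply Corollary~\ref{C:lower bound in terms of dim sing} directly. Since $v(\Delta(b))=1\ge 1$, the element $\Delta(b)$ reduces to~$0$ in~$k$, so $H_{\bar b}$ is singular. Because $k$ is algebraically closed (standing assumption of Section~\ref{S:vminDelta}), $(H_{\bar b})_{\sing}$ has at least one $k$-point, hence $\dim\Span((H_{\bar b})_{\sing}(k))\ge 0$. On the other hand, Corollary~\ref{C:lower bound in terms of dim sing} gives
\[
   \dim\Span((H_{\bar b})_{\sing}(k))+1 \;\le\; v(\Delta(b)) \;=\; 1,
\]
so the span has dimension exactly~$0$, i.e., $(H_{\bar b})_{\sing}(k)$ is a single point.

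The only remaining task is to upgrade this from ``a single $k$-point'' to ``a single closed point of the scheme $(H_{\bar b})_{\sing}$''. I would do this by a standard dimension argument: since $k$ is algebraically closed, every closed point of the finite-type $k$-scheme $(H_{\bar b})_{\sing}$ is automatically a $k$-point, and any positive-dimensional finite-type $k$-scheme over an algebraically closed (hence infinite) field has infinitely many $k$-points. Therefore $(H_{\bar b})_{\sing}$ is zero-dimensional, and having a unique $k$-point it consists topologically of a single (possibly non-reduced) closed point.

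I expect no real obstacle; essentially all of the work was done in proving Lemma~\ref{L:mult 2 if sing at 2 points} and Corollary~\ref{C:lower bound in terms of dim sing}. As a self-contained alternative route that bypasses Corollary~\ref{C:lower bound in terms of dim sing}, one could argue by contradiction: if there were two distinct $k$-points $P_1\ne P_2$ in $(H_{\bar b})_{\sing}$, they would span a $\PP^1$, and Lemma~\ref{L:mult 2 if sing at 2 points} (with $r=2$ and $a_0=\bar b$) would give $\vmin_{\Delta}(\bar b)\ge 2$, contradicting $\vmin_{\Delta}(\bar b)\le v(\Delta(b))=1$; one then concludes as in the previous paragraph.
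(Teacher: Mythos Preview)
Your proof is correct and essentially the same as the paper's. The paper argues exactly by your ``alternative route'': if $(H_{\bar b})_{\sing}$ had two distinct points, Lemma~\ref{L:mult 2 if sing at 2 points} with $r=2$ would give $\vmin_\Delta(\bar b)\ge 2$, a contradiction. Your main route via Corollary~\ref{C:lower bound in terms of dim sing} is just a repackaging of the same lemma, so the two arguments are not meaningfully different.
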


\begin{proof}
  Since $\Delta(\bar{b}) = \overline{\Delta(b)} = 0$, $H_{\bar b}$ has at
  least one singularity.
  If $(H_{\bar b})_\sing$ contained at least two points, then 
  $v(\Delta(b)) \ge \vmin_\Delta(\bar{b}) \ge 2$ by Lemma~\ref{L:mult 2 if sing at 2 points}, contradicting the assumption.
\end{proof}

See Corollary~\ref{C:vmin=1 and nondeg double pt} below for a more
precise statement.

\section{When the discriminant has valuation 1}
\label{S:valuation 1}

We now characterize when $v(\Delta(f)) = 1$. Recall the statement
from the introduction:

\begin{theorem1.1}
Let $f \in R[x_0,\dots,x_n]$ be a homogeneous polynomial.
Let $\Delta(f)$ be its discriminant.
Let $H = \Proj(R[x_0,\dots,x_n]/\ideal{f})$.
Then the following are equivalent:
\begin{enumerate}[\upshape (i)]
\item $v(\Delta(f))=1$;
\item $H$ is regular,
and $(H_k)_{\sing}$ consists of a single nondegenerate double point in $H(k)$.
\end{enumerate}
\end{theorem1.1}

\begin{proof}
\emph{Case 1: $\Char k=2$ and $n$ is odd.}
By \cite{Saito2012}*{Theorem~4.2},
if the sign of $\Delta$ is chosen appropriately,
then $\Delta = A^2+4B$ for some polynomials $A,B$,
so $v(\Delta(f)) \ne 1$.
On the other hand, by Remark~\ref{R:explicit nondegenerate double point},
$H_k$ cannot have a nondegenerate double point.
Thus (i) and~(ii) both fail.

\emph{Case 2: $\Char k \ne 2$ or $n$ is even.}
The surjection $R[\{a_\ii\}] \surjects R$
sending the $a_\ii$ to the corresponding coefficients $\alpha_\ii$ of $f$
defines an $R$-morphism $\iota \colon \Spec R \to \Aff^N_R$.
Then $H \to \Spec R$ is the pullback by $\iota$ of $\calH_R \to \Aff^N_R$.
Let $P = \iota(\Spec k) \in \Aff^N(k)$.

(i)$\Rightarrow$(ii): Suppose that $v(\Delta(f))=1$.
By Corollary~\ref{C:vDelta=1 then one singularity},
$(H_k)_{\sing}$ consists of a single point.
The surjection $R[\{a_\ii\}] \surjects R$ maps $\Delta$ to $\Delta(f)$,
so the $a_\ii - \alpha_\ii$ and $\Delta$ are local parameters
for $\Aff^N_R$ at $P$.
Thus $D_R = \Spec(R[\{a_\ii\}]/\ideal{\Delta})$ is regular at $P$,
so $D_R$ is normal at $P$.
Then Lemma~\ref{L:normalization of Dprime}\eqref{I:above Dfinite} implies that
the fiber $(H_k)_{\sing} = \varphi^{-1}(P)$
consists of a single reduced $k$-point~$Q$.
By Remark~\ref{R:nondegenerate double point},
$Q$ is a nondegenerate double point of~$H_k$.

Choose an $\Aff^n_R \subset \PP^n_R$ containing $Q$;
let $f_0$ be the corresponding dehomogenization of $f$.
The point $(H_k)_{\sing}$ is cut out in $\Aff^n_R$
by $f_0$ and its partial derivatives;
these $n+1$ functions are therefore local parameters for $\PP^n_R$ at $Q$,
so the local ring $\OO_{H,Q} = \OO_{\PP^n_R,Q}/\ideal{f_0}$ is regular too.
On the other hand, $H-\{Q\}$ is smooth over $\Spec R$.
Thus $H$ is regular everywhere.

(ii)$\Rightarrow$(i):
Now suppose that $H$ is regular
and $(H_k)_{\sing}$ consists of a nondegenerate double point $Q \in H(k)$.
Let $f_0$ be as above,
so $f_0$ and its partial derivatives lie in the maximal ideal
$\mm_{\PP^n_R,Q} \subset \OO_{\PP^n_R,Q}$.
Since $Q$ is a nondegenerate double point, the partial
derivatives form a basis for $\mm_{\PP^n_k,Q}/\mm_{\PP^n_k,Q}^2$,
so they are independent in $\mm_{\PP^n_R,Q}/\mm_{\PP^n_R,Q}^2$.
On the other hand, the image of $f_0$ in $\mm_{\PP^n_R,Q}/\mm_{\PP^n_R,Q}^2$
is nonzero (since $\OO_{H,Q} = \OO_{\PP^n_R,Q}/\ideal{f_0}$ is regular)
and in fact \emph{independent} of the partial derivatives
(since it maps to $0$ in $\mm_{\PP^n_k,Q}/\mm_{\PP^n_k,Q}^2$).
Thus $f_0$ and its partial derivatives
form a basis of $\mm_{\PP^n_R,Q}/\mm_{\PP^n_R,Q}^2$,
so by Nakayama's lemma, they generate $\mm_{\PP^n_R,Q}$,
so $H_{\sing} \isom \Spec k$.

Pulling back $(\calH_R)_{\sing} \to D_R \injects \Aff^N_R$
by $\iota$ gives $H_{\sing} \to \Spec(R/\ideal{\Delta(f)}) \to \Spec R$.
Since $(H_k)_{\sing}$ is a single reduced $k$-point, $P \in D_1(k)$.
By Lemma~\ref{L:normalization of Dprime}\eqref{I:D1 and Dfinite are open},
$D_{1,R}$ is open in $D_R$,
so $\Spec(R/\ideal{\Delta(f)})$ is contained in $D_{1,R}$.
By Lemma~\ref{L:normalization of Dprime}\eqref{I:above D1},
$(\calH_R)_{\sing} \to D_R$ is an isomorphism above $D_{1,R}$,
so $H_{\sing} \isom \Spec(R/\ideal{\Delta(f)})$.
By the previous paragraph, $H_{\sing} \isom \Spec k$,
so $v(\Delta(f))=1$.
\end{proof}

\begin{corollary} \label{C:vmin=1 and nondeg double pt}
  Assume that $k$ is algebraically closed. For $a \in k^N$
  $($and for every choice of~$R$ with residue field~$k$$)$, the following statements are equivalent.
  \begin{enumerate}[\upshape(a)]
    \item \label{I:vmin=1,a}
          $\vmin_\Delta(a) = 1$.
    \item \label{I:vmin=1,b}
          $a$ is a smooth point on~$D_k$.
    \item \label{I:vmin=1,c}
      $(H_a)_\sing$ consists of a single nondegenerate double point.
    \item \label{I:vmin=1,d}
      $a \in D_1(k)$.
  \end{enumerate}
\end{corollary}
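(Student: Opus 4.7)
The plan is to prove the four-way equivalence by treating (b), (c), (d) as $R$-independent properties of~$a$ and (a) as an $R$-dependent quantity, then linking them via three sub-equivalences: (c)$\Leftrightarrow$(d) from definitions, (a)$\Leftrightarrow$(b) from the previously established local analysis of $\vmin_\Delta$, and (a)$\Leftrightarrow$(c) by invoking Theorem~\ref{T:valuation 1}.

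The equivalence (c)$\Leftrightarrow$(d) is essentially a rephrasing of Remark~\ref{R:nondegenerate double point}: by definition of~$D_1$, the condition $a \in D_1(k)$ says that $\varphi^{-1}(a) = (H_a)_{\sing}$ consists of a single reduced $k$-point, while the remark characterizes nondegenerate double points of a finite-type $k$-scheme as the isolated reduced points of its singular subscheme. The equivalence (a)$\Leftrightarrow$(b) follows from two facts: first, since reducing $\Delta(b)$ modulo~$\mm$ gives $\Delta(a)$, one has $\vmin_\Delta(a) \ge 1$ iff $a \in D(k)$; second, Corollary~\ref{C:description of D2} gives $\vmin_\Delta(a) \ge 2$ iff $a \in (D_k)_{\sing}$. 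Combining, $\vmin_\Delta(a) = 1$ iff $a$ lies in $D(k) \setminus (D_k)_{\sing}$, which, since $k$ is algebraically closed, is exactly the smooth locus of~$D_k$.

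For (a)$\Leftrightarrow$(c): if $\vmin_\Delta(a)=1$, pick any lift $b \in R^N$ of~$a$ with $v(\Delta(b))=1$; then Theorem~\ref{T:valuation 1} gives that $H_b$ is regular and $(H_{b,k})_{\sing} = (H_a)_{\sing}$ consists of a single nondegenerate double point, yielding~(c). For the converse, I plan to construct explicitly a lift $b \in R^N$ of~$a$ such that $H_b$ is regular over~$R$; Theorem~\ref{T:valuation 1} then forces $v(\Delta(b))=1$, whence $\vmin_\Delta(a) \le 1$, and the reverse inequality $\vmin_\Delta(a) \ge 1$ is automatic from $a \in D(k)$. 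After a linear change of coordinates I may take $Q = (1:0:\cdots:0)$ as the unique nondegenerate double point of~$H_a$. Since $f_a(Q)=0$, the coefficient of $x_0^d$ in~$f_a$ is zero, so I may lift all other coefficients of~$f_a$ arbitrarily to~$R$ while setting the $x_0^d$-coefficient of~$f_b$ equal to~$\pi$. Away from~$Q$, the special fiber $H_{b,k} = H_a$ is smooth over~$k$, so $H_b$ is smooth, in particular regular, over~$R$ there. At~$Q$, working in the dehomogenization $f_0 = f_b(1,x_1,\ldots,x_n) \in R[x_1,\ldots,x_n]$, the constant term is~$\pi$ and each linear coefficient $(\partial f_0/\partial x_i)(Q) \in R$ reduces modulo~$\pi$ to the partial derivative of~$f_a$ at the singular point~$Q$, hence lies in~$\pi R$. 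Thus the image of~$f_0$ in $\mm_{\PP^n_R,Q}/\mm_{\PP^n_R,Q}^2$ equals the nonzero element~$\pi$, so $\OO_{H_b,Q}=\OO_{\PP^n_R,Q}/\ideal{f_0}$ is regular.

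The principal obstacle is the regular-lift construction just sketched; everything else is bookkeeping built from prior results. The exceptional case $\Char k = 2$ with~$n$ odd requires no separate treatment: in that setting no nondegenerate double point can occur in ambient dimension~$n$ (Remark~\ref{R:explicit nondegenerate double point}), and by Case~1 of the proof of Theorem~\ref{T:valuation 1} the discriminant has the form $\Delta = A^2 + 4B$, which forces $\vmin_\Delta$ never to equal~$1$ and makes $D_k$ (defined by a square) have empty smooth locus, so all four conditions are simultaneously false and the equivalence is vacuous.
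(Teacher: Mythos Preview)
Your argument is correct and follows the same overall route as the paper: (c)$\Leftrightarrow$(d) via Remark~\ref{R:nondegenerate double point}, (a)$\Leftrightarrow$(b) via Corollary~\ref{C:description of D2}, and (a)$\Leftrightarrow$(c) via Theorem~\ref{T:valuation 1}. Your construction of the regular lift in (c)$\Rightarrow$(a) is just an explicit version of the paper's ``by adding a multiple of $\pi$ to $b$ if necessary'': setting the $x_0^d$-coefficient to $\pi$ is exactly such an addition. One small point you leave implicit: you verify regularity of $H_b$ only at points of the special fiber, whereas Theorem~\ref{T:valuation 1} requires $H_b$ regular as a scheme. The paper closes this by noting that the regular locus is open and contains the special fiber, hence (since $H_b$ is proper over the local base $\Spec R$, so every closed point lies in the special fiber) equals all of $H_b$; you should state this. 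Your separate treatment of the exceptional case $\Char k = 2$, $n$ odd is correct but unnecessary---the equivalences go through uniformly, with all four conditions simply failing.
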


\begin{proof}\hfill

  \eqref{I:vmin=1,a}$\Leftrightarrow$\eqref{I:vmin=1,b}:
  The following are equivalent:
  $\vmin_\Delta(a)>0$; $\Delta(a)=0$ in $k$; $a \in D(k)$.
  By Corollary~\ref{C:description of D2}, $\vmin_\Delta(a) \ge 2$ if and only if $a \in D_{\sing}(k)$.

  \eqref{I:vmin=1,a}$\Rightarrow$\eqref{I:vmin=1,c}: Use Theorem~\ref{T:valuation 1}.

  \eqref{I:vmin=1,c}$\Rightarrow$\eqref{I:vmin=1,a}:
  Let $a \in k^N$ be such that $(H_a)_\sing$ consists of a single nondegenerate double point~$Q$.
  Lift $a$ to $b \in R^N$.
  Then $H_b$ is regular at every point of its special fiber except possibly $Q$.
  By adding a multiple of $\pi$ to $b$ if necessary,
  we may assume that $H_b$ is regular also at $Q$.
  The regular locus of $H_b$ is open and contains the special fiber,
  so $H_b$ is regular.
Theorem~\ref{T:valuation 1} applied to $H_b$ implies that $v(\Delta(b))=1$.
On the other hand, if $b'$ is any lift of $a$,
then $v(\Delta(b')) \ge 1$ since $H_a$ is singular.
Thus $\vmin_\Delta(a)=1$.

\eqref{I:vmin=1,c}$\Leftrightarrow$\eqref{I:vmin=1,d}:
Use Remark~\ref{R:nondegenerate double point} and the definition of $D_1$.
\end{proof}

\begin{corollary} \label{C:D1=Dsmooth}
The subscheme $D_1$ is the smooth locus of $D \to \Spec \Z$.
\end{corollary}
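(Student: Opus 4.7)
The plan is to reduce to geometric points in the fibers of $D \to \Spec \Z$ and then invoke the pointwise equivalence already established in Corollary~\ref{C:vmin=1 and nondeg double pt}(b)$\Leftrightarrow$(d).

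First, I would observe that $D$ is flat over $\Spec \Z$. The integral subscheme $D \subset \Aff^N_\Z$ is cut out by the irreducible polynomial $\Delta$, which has positive degree in the $a_\ii$ and hence no integer prime factor; so $\Delta \bmod p \ne 0$ for every prime $p$, $D$ dominates $\Spec \Z$, and being integral over the PID $\Z$ it is torsion-free, hence flat. By the relative jacobian criterion applied to $D = V(\Delta) \subset \Aff^N_\Z$, flatness then implies that $D \to \Spec \Z$ is smooth at $a \in D$ if and only if the fiber $D_{\kappa(p)} = V(\Delta \bmod p) \subset \Aff^N_{\kappa(p)}$ is smooth at $a$ over $\kappa(p)$, where $p = \Char \kappa(a)$ (allowing $p = 0$ for the generic fiber).

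Second, I would descend fiber smoothness to an algebraically closed residue field. Let $k$ be an algebraic closure of $\kappa(p)$. Since smoothness is fpqc-local on the target, smoothness of $D_{\kappa(p)}$ at $a$ is equivalent to smoothness of $D_k$ at any geometric point above $a$. Using Lemma~\ref{L:extension of DVR} to produce a DVR $R$ with residue field $k$, Corollary~\ref{C:vmin=1 and nondeg double pt}(b)$\Leftrightarrow$(d) then says that for every $c \in D_k(k)$, the point $c$ is smooth on $D_k$ if and only if $c \in D_1(k)$. The smooth locus of $D_k$ and $D_1 \cap D_k$ are both open subschemes of the finite-type $k$-scheme $D_k$; two such opens with the same $k$-points coincide by the Nullstellensatz, so $(D_k)_{\smooth} = D_1 \cap D_k$. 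Descending through the faithfully flat map $D_k \to D_{\kappa(p)}$ identifies the smooth locus of $D_{\kappa(p)}$ with $D_1 \cap D_{\kappa(p)}$.

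Finally, the underlying set of $D$ is the disjoint union of its fibers over $\Spec \Z$, so assembling these fiberwise identifications shows that the smooth locus of $D \to \Spec \Z$ and $D_1$ have the same underlying set in $D$, hence coincide as open subschemes. I do not anticipate a substantive obstacle: the real content is already packaged in Corollary~\ref{C:vmin=1 and nondeg double pt}, and the remainder is a flatness-plus-descent bookkeeping argument; the main care required is to distinguish smoothness of $D$ over $\Spec \Z$ from smoothness of the individual fibers over their residue fields.
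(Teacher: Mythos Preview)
Your proof is correct and follows essentially the same approach as the paper's: reduce to geometric points of the fibers and invoke Corollary~\ref{C:vmin=1 and nondeg double pt}(b)$\Leftrightarrow$(d). The paper's version is a two-sentence sketch that leaves the flatness and fiber-smoothness bookkeeping implicit, whereas you have spelled these out; one minor remark is that Lemma~\ref{L:extension of DVR} presupposes a starting DVR, so it is cleaner simply to take $R = k\pws{t}$ (as the paper's phrase ``every field $k$ is the residue field of some discrete valuation ring'' implicitly does).
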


\begin{proof}
  This can be checked on geometric points,
  and every field $k$ is the residue field of some discrete valuation ring $R$.
  Apply Corollary~\ref{C:vmin=1 and nondeg double pt}\eqref{I:vmin=1,b}$\Leftrightarrow$\eqref{I:vmin=1,d}.
\end{proof}

\section{Hypersurfaces with a positive-dimensional singularity}
\label{S:positive-dimensional}

In Lemma~\ref{L:surjection on global sections},
Corollary~\ref{C:dimension of locus of r-singular hypersurfaces},
and Lemma~\ref{L:dense},
we assume that $n \ge 2$, $r \ge 1$, and
$P_1,\ldots,P_r$ are distinct points in $\PP^n(k)$.
Let $\OO = \OO_{\PP^n_k}$.
For each $P \in \PP^n(k)$, let $\mm_P \subset \OO$ be the ideal sheaf of $P$.

\begin{lemma}
\label{L:surjection on global sections}
If $d \ge 2r-1$, then $\OO(d) \to \prod_i (\OO/\mm_{P_i}^2)(d)$
induces a surjection on global sections.
\end{lemma}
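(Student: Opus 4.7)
The plan is to prove surjectivity by constructing, for each index $i$ and each prescribed 2-jet at $P_i$, a global section of $\OO(d)$ realizing that jet at $P_i$ while vanishing to order at least $2$ at every other $P_j$. Summing such contributions over $i$ then hits any prescribed element of $\prod_i (\OO/\mm_{P_i}^2)(d)$, giving the desired surjection on global sections.

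To build such a section, fix $i$. For each $j \ne i$, choose a linear form $\ell_{ij}$ vanishing at $P_j$ but not at $P_i$; then $h_i \colonequals \prod_{j \ne i} \ell_{ij}^2$ is a form of degree $2(r-1)$ vanishing to order $2$ at each $P_j$ with $j \ne i$ and satisfying $h_i(P_i) \ne 0$. To raise the total degree to $d$, multiply by a fixed form $q$ of degree $d - 2r + 1$ with $q(P_i) \ne 0$ (for instance a suitable power of a coordinate); this is exactly where the hypothesis $d \ge 2r-1$ is used. For any linear form $m$, the product $h_i \, q \, m$ then has degree $d$ and still vanishes to order $\ge 2$ at every $P_j$ with $j \ne i$.

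It remains to verify that, as $m$ ranges over the $(n+1)$-dimensional space of linear forms, the 2-jet of $h_i \, q \, m$ at $P_i$ fills out the whole $(n+1)$-dimensional space $(\OO/\mm_{P_i}^2)(d)$. This is a dimension count together with a short local calculation: after choosing coordinates so that $P_i = (1{:}0{:}\cdots{:}0)$ and dehomogenizing, write $h_i \, q = a + \sum_k b_k x_k + O(x^2)$ with $a \ne 0$ and $m = c + \sum_k d_k x_k$; then the 2-jet of the product at $P_i$ depends on $(c, d_1, \ldots, d_n)$ through a linear map whose matrix is lower triangular with diagonal entries equal to $a$, hence has determinant $a^{n+1} \ne 0$. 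So the map from linear forms to 2-jets at $P_i$ is an isomorphism.

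The only real obstacle is packaging this cleanly — choosing the $\ell_{ij}$ and $q$ in a uniform way and carrying out the Jacobian computation on the chart around $P_i$. Everything else is formal: linearity reduces the statement to one index at a time, and the explicit construction above produces the required preimages.
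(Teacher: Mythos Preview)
Your argument is correct and is essentially the same as the paper's: both proofs hit the $s$th factor by taking a product of a varying linear form, the square of $r-1$ linear forms vanishing at the other $P_j$'s, and a fixed form of degree $d-(2r-1)$ nonvanishing at $P_s$. The only cosmetic differences are that the paper first reduces to infinite $k$ so as to choose a single $\ell_j$ per point (vanishing at $P_j$ and at no other $P_i$) rather than your per-pair $\ell_{ij}$, and the paper states the 2-jet surjectivity for linear forms directly rather than via your explicit triangular-matrix computation.
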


\begin{proof}
Surjectivity of a linear map is unchanged by field extension,
so we may assume that $k$ is infinite. Then we can choose, for
each $1 \le i \le r$, a linear form~ $\ell_i$ vanishing at~$P_i$
but not at~$P_j$ for any $j \ne i$.
We can also choose a homogeneous polynomial~$h$ of degree $d-(2r-1)$
not vanishing at any $P_i$.
For each $s$, as $g$ ranges over linear forms,
the image of $g$ in $(\OO/\mm_{P_s}^2)(1)$ ranges over all its sections,
so the images of $g h \prod_{j \ne s} \ell_j^2$
in $\prod_i (\OO/\mm_{P_i}^2)(d)$
exhaust the $s$th factor of $\prod_i (\OO/\mm_{P_i}^2)(d)$.
\end{proof}

\begin{remark}
The result of Lemma~\ref{L:surjection on global sections} is sharp
when the points $P_1, \ldots, P_r$ are on a line: in this case,
no $d < 2r-1$ will have the stated property.
\end{remark}

Recall the definitions of $N$ and $H_f$ from Section~\ref{S:discriminant}.
Let $Z \subset \Aff^N$ be the subvariety whose points correspond to
$f$ such that $(H_f)_{\sing}$ contains $P_1,\ldots,P_r$.

\begin{corollary}
\label{C:dimension of locus of r-singular hypersurfaces}
If $d \ge 2r-1$, then $Z$ is an affine space of dimension $N-r(n+1)$.
\end{corollary}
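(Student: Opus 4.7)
The plan is to identify $Z$ with the kernel of a $k$-linear evaluation map from $\HH^0(\PP^n_k, \OO(d)) \isom k^N$ to a finite-dimensional $k$-vector space whose dimension I can compute directly, and then apply Lemma~\ref{L:surjection on global sections} to finish.

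First I would observe that for a $k$-point $P \in \PP^n(k)$, a homogeneous polynomial $f \in k[x_0,\ldots,x_n]_d$ satisfies $P \in (H_f)_{\sing}(k)$ if and only if, viewing $f$ as a global section of $\OO(d)$, its image in $(\OO/\mm_P^2)(d)$ vanishes. Indeed, after trivializing $\OO(d)$ on an affine chart containing~$P$, this amounts to the vanishing of $f$ and of all its first partial derivatives at~$P$, which by the definition in Section~\ref{S:discriminant} is exactly the condition cutting out $(H_f)_{\sing}$. Imposing this simultaneously at each $P_i$ identifies $Z \subset \Aff^N$ with the kernel of the evaluation map
\[
  \operatorname{ev} \colon \HH^0(\PP^n_k, \OO(d)) \To \prod_{i=1}^r \HH^0\bigl(\PP^n_k, (\OO/\mm_{P_i}^2)(d)\bigr).
\]
Since $\operatorname{ev}$ is $k$-linear, $Z$ is a linear (hence affine) subspace of $\Aff^N$.

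Next I would compute the dimension of the target. The sheaf $\OO/\mm_{P_i}^2$ is a skyscraper sheaf supported at $P_i$ with stalk $\OO_{P_i}/\mm_{P_i}^2$, which has $k$-dimension $n+1$ (one dimension for $\OO_{P_i}/\mm_{P_i} \isom k$, plus $\dim_k \mm_{P_i}/\mm_{P_i}^2 = n$). Twisting by the invertible sheaf $\OO(d)$ does not change this dimension, so each of the $r$ factors has dimension $n+1$, giving total target dimension $r(n+1)$.

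Finally, Lemma~\ref{L:surjection on global sections} applied under the hypothesis $d \ge 2r-1$ guarantees that $\operatorname{ev}$ is surjective. Rank-nullity then yields $\dim Z = N - r(n+1)$, and as a linear subspace of an affine space, $Z$ is itself an affine space of this dimension. I do not foresee a substantial obstacle: essentially all the content is packaged into Lemma~\ref{L:surjection on global sections}, and the only point requiring a moment of thought is the sheaf-theoretic reformulation of the condition ``$H_f$ is singular at $P_i$'' as ``$f$ lies in $\mm_{P_i}^2 \cdot \OO(d)$ locally at $P_i$''.
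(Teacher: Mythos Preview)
Your proposal is correct and takes essentially the same approach as the paper: the paper's proof is the single sentence ``$Z(k)$ is the kernel of the surjection in Lemma~\ref{L:surjection on global sections},'' which is exactly your argument with the target-dimension computation and the sheaf-theoretic reformulation left implicit. You have simply unpacked what the paper assumes the reader will supply.
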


\begin{proof}
The set $Z(k)$ is the kernel of the surjection in Lemma~\ref{L:surjection on global sections}.
\end{proof}

\begin{lemma}
\label{L:dense}
Assume that $k$ is algebraically closed.
If $r \le (d-1)/2$, then there exists $f \in k[x_0,\ldots,x_n]_d$
such that $(H_f)_{\sing} = \{P_1,\ldots,P_r\}$ as a set.
\end{lemma}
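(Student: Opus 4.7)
The plan is a dimension count. Let $Z \subseteq \Aff^N$ be the linear subvariety parameterizing $f \in k[x_0,\ldots,x_n]_d$ such that $H_f$ is singular at each of $P_1,\ldots,P_r$. Since $d \ge 2r+1 > 2r-1$, Corollary~\ref{C:dimension of locus of r-singular hypersurfaces} identifies $Z$ with an affine space of dimension $N - r(n+1)$. The goal is to show that for a general $f \in Z$, the singular locus $(H_f)_\sing$ equals $\{P_1,\ldots,P_r\}$ as a set.

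To bound the ``bad'' locus of $f \in Z$ whose hypersurface has an extra singular point, I would form the incidence variety
\[
  I \colonequals \bigl\{(f,Q) \in Z \times (\PP^n \setminus \{P_1,\ldots,P_r\}) : Q \in (H_f)_\sing\bigr\}
\]
and project to $\PP^n \setminus \{P_1,\ldots,P_r\}$. For each fixed $Q$, the fiber is the linear subspace of $\Aff^N$ of polynomials singular at all $r+1$ of the points $P_1,\ldots,P_r,Q$. The hypothesis $d \ge 2r+1 = 2(r+1)-1$ is exactly what is needed to apply Corollary~\ref{C:dimension of locus of r-singular hypersurfaces} to these $r+1$ points, giving fiber dimension $N - (r+1)(n+1)$, independent of~$Q$. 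Hence
\[
  \dim I \le n + N - (r+1)(n+1) = \dim Z - 1,
\]
so the first projection has image $\pi_1(I) \subsetneq Z$.

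Because $\dim Z \ge 1$---a routine check that $\binom{n+d}{n} > r(n+1)$ holds for $n \ge 2$, $r \ge 1$, $d \ge 2r+1$, using for instance $\binom{n+d}{n} \ge \binom{n+2r+1}{n}$---the set $Z \setminus (\pi_1(I) \cup \{0\})$ is nonempty. Any $f$ in it is nonzero, satisfies $(H_f)_\sing \cap (\PP^n(k) \setminus \{P_1,\ldots,P_r\}) = \emptyset$ by construction, and satisfies $\{P_1,\ldots,P_r\} \subseteq (H_f)_\sing$ by membership in $Z$, giving the required set-theoretic equality. The principal step is obtaining the constant fiber dimension in~$I$ from Corollary~\ref{C:dimension of locus of r-singular hypersurfaces} applied to $r+1$ points; the verification that $\dim Z > 0$ and the final extraction of $f$ are routine.
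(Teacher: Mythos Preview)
Your proof is correct and follows essentially the same approach as the paper: form the incidence variety $I$ over $\PP^n \setminus \{P_1,\ldots,P_r\}$, use Corollary~\ref{C:dimension of locus of r-singular hypersurfaces} (applied to $r+1$ points, which is permitted since $d \ge 2r+1 = 2(r+1)-1$) to compute the fiber dimension, and deduce $\dim I < \dim Z$. Your extra care in excluding $f=0$ and checking $\dim Z>0$ is harmless but unnecessary, since $f=0$ has $(H_f)_{\sing}=\PP^n$ and hence already lies in $\pi_1(I)$.
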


\begin{proof}
Let $P = \{P_1,\ldots,P_r\}$.
Let
\[
	I = \{(f,P_{r+1}) \in Z \times (\PP^n-P)  :  P_{r+1} \in (H_f)_{\sing} \}.
\]
The fiber of $I \to \PP^n - P$ above $P_{r+1}$
consists of the $f$ for which $(H_f)_{\sing} \supset \{P_1,\ldots,P_{r+1}\}$,
so its dimension is $N-(r+1)(n+1)$
by Corollary~\ref{C:dimension of locus of r-singular hypersurfaces},
which also implies $\dim Z = N - r(n+1)$.
Thus $\dim I = \dim(\PP^n-P) + N-(r+1)(n+1) = \dim Z - 1$.
Therefore there exists a point in $Z$ outside the image of $I$;
this defines $f$.
\end{proof}

\begin{theorem}
  \label{T:positive-dimensional singularity}
Let $n,d \ge 2$.
Let $f \in R[x_0,\ldots,x_n]_d$.
Let $H = H_f$.
Assume that $\dim \, (H_k)_{\sing} \ge 1$.
\begin{enumerate}[\upshape(a)]
  \item \label{I:positive-dimensional singularity,a}
        $v(\Delta(f)) \ge \dim \, (H_k)_{\sing} + 1 \ge 2$.
  \item \label{I:positive-dimensional singularity,b}
        $v(\Delta(f)) \ge \lfloor(d-1)/2\rfloor$.
  \item \label{I:positive-dimensional singularity,c}
        If $n = 2$, then $v(\Delta(f)) \ge 2d-3$ if $d \neq 4$
        and $v(\Delta(f)) \ge 4$ if $d = 4$.
  \item \label{I:positive-dimensional singularity,d}
        If $(H_k)_\sing$ contains a line, then $v(\Delta(f)) \ge d-1$.
\end{enumerate}
\end{theorem}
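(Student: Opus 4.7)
For parts~\eqref{I:positive-dimensional singularity,a} and~\eqref{I:positive-dimensional singularity,b}, the plan is to reduce to the case of algebraically closed residue field using Lemma~\ref{L:extension of DVR} (which preserves $v(\Delta(f))$) and then invoke the machinery of Section~\ref{S:vminDelta}. For~\eqref{I:positive-dimensional singularity,a}, once $k$ is algebraically closed, any irreducible component of $(H_k)_{\sing}$ of dimension $s \colonequals \dim (H_k)_{\sing}$ has its $k$-points spanning a linear subspace of dimension at least $s$ (otherwise the component would lie in a proper linear subspace of its span), so $\dim \Span ((H_k)_{\sing}(k)) \ge s$ and Corollary~\ref{C:lower bound in terms of dim sing} produces the bound. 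For~\eqref{I:positive-dimensional singularity,b}, set $r \colonequals \lfloor (d-1)/2 \rfloor$ (the bound being trivial when $r = 0$), choose $r$ distinct $k$-points $P_1, \ldots, P_r \in (H_k)_{\sing}$, and apply Lemma~\ref{L:dense} (valid since $r \le (d-1)/2$) to obtain $f' \in k[x_0,\ldots,x_n]_d$ with $(H_{f'})_{\sing} = \{P_1, \ldots, P_r\}$; Lemma~\ref{L:vmin of Delta}\eqref{I:vmin of Delta,2} then delivers the conclusion.

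For~\eqref{I:positive-dimensional singularity,d}, the plan is to construct explicitly an $f' \in k[x_0, \ldots, x_n]_d$ whose singular subscheme is finite and consists of $d - 1$ reduced $k$-points on the given line $L$; since $L \subset (H_k)_{\sing}$, Lemma~\ref{L:vmin of Delta}\eqref{I:vmin of Delta,2} then yields $v(\Delta(f)) \ge d - 1$. After choosing coordinates so that $L = V(x_2, \ldots, x_n)$, when $\Char(k) \nmid d$ I would take
\[
	f' \colonequals x_2 \, g(x_0, x_1) + x_3^d + \cdots + x_n^d,
\]
where $g$ is a binary form of degree $d - 1$ with $d - 1$ distinct roots: the equations $\partial f'/\partial x_i = 0$ for $i \ge 3$ force $x_3 = \cdots = x_n = 0$, and then $\partial f'/\partial x_2 = g(x_0,x_1) = 0$ picks out exactly the $d - 1$ reduced points on $L$, while the remaining partial derivatives vanish there automatically. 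When $\Char(k) \mid d$, each $x_i^d$ summand would be modified (for example to $x_i^{d-1}(x_i + c_i x_{i-1})$ with generic $c_i$) so that $\partial f'/\partial x_i$ remains nonzero and still cuts out the hyperplane $x_i = 0$ at singular points; the same count of $d - 1$ reduced points on $L$ results.

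The hard part will be~\eqref{I:positive-dimensional singularity,c}, where the target bound $2d - 3$ strictly exceeds the $\lfloor (d+1)/2 \rfloor$ ceiling on the number of collinear isolated singularities of a degree-$d$ plane curve supplied by Lemma~\ref{L:dense}, so collinearity alone cannot suffice. Since $n = 2$ and the singular locus of a reduced plane curve is $0$-dimensional, the hypothesis $\dim (H_k)_{\sing} \ge 1$ forces $f_k$ to be non-reduced: $f_k = g^2 h$ with $g$ non-constant, and $V(g) \subset (H_k)_{\sing}$. The plan, for each factorization type of $f_k$, is to construct an auxiliary plane curve $f' \in k[x_0, x_1, x_2]_d$ (typically a reducible product of lines together with a residual factor, chosen so that the components meet $V(g)$ transversely) whose singular subscheme $(H_{f'})_{\sing}$ is finite and contains at least $2d - 3$ distinct nodes lying on $V(g)$; Lemma~\ref{L:vmin of Delta}\eqref{I:vmin of Delta,2} then yields the bound. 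The value $2d - 3$ matches the codimension of the ``double-factor locus'' $\{g^2 h\} \subset \Aff^N$ inside the discriminant divisor $D$, which is a natural benchmark for how much vanishing of $\Delta$ one can hope to extract from the non-reducedness of $f_k$. The $d = 4$ exception—attained, for example, by $f_k = \ell_1^2 \ell_2^2$, where at most $4$ rather than $5$ nodes can be aligned with the singular locus—requires a separate case analysis yielding only the weaker bound $v(\Delta(f)) \ge 4$.
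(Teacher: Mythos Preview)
Your plans for (a), (b), and (d) are essentially the paper's. For (d), the paper's auxiliary polynomial is $x_2 \prod_{i=1}^{d-1}(x_1 - c_i x_0) + g(x_3,\ldots,x_n)$ with $g$ defining a \emph{smooth} hypersurface in $\PP^{n-3}$; choosing such a $g$ handles all characteristics at once, which is cleaner than the ad~hoc modification you sketch when $\Char k \mid d$. (Minor point: your $(H_{f'})_{\sing}$ also contains $(0{:}0{:}1{:}0{:}\cdots{:}0)$ when $d\ge 3$, so it does not literally ``consist of'' the $d-1$ points on $L$; but finiteness is all that Lemma~\ref{L:vmin of Delta}\eqref{I:vmin of Delta,2} needs.)

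For (c) your plan has a genuine gap. You want to apply Lemma~\ref{L:vmin of Delta}\eqref{I:vmin of Delta,2}, which requires an auxiliary $f'$ with $(H_{f'})_{\sing}$ finite and containing $2d-3$ points of $(H_{\bar f})_{\sing}$. But in the basic case $\bar f = \ell^2 h$ with $\ell$ linear and $h$ smooth, $(H_{\bar f})_{\sing}$ is exactly the line $V(\ell)$, and a degree-$d$ plane curve with finite singular locus has at most $d-1$ singular points on any given line $L$: if $L$ is a simple component of $f'$, the singularities of $f'$ on $L$ are the $\le d-1$ intersections of $L$ with the residual curve; if $L$ is not a component, B\'ezout caps the count at $\lfloor d/2\rfloor$; and if $L$ is a repeated component the singular locus is infinite. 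Since $d-1 < 2d-3$ for all $d \ge 3$, Lemma~\ref{L:vmin of Delta}\eqref{I:vmin of Delta,2} cannot reach the target.

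The paper uses Lemma~\ref{L:vmin of Delta}\eqref{I:vmin of Delta,1} instead. Writing $\bar f = g^2 h$ with $\deg g = m$, one takes $V$ to be the variety of all degree-$d$ forms factoring as $g_1 g_2 h'$ with $\deg g_1 = \deg g_2 = m$; since $\bar f = g\cdot g\cdot h \in V(k)$, semicontinuity applies. For generic $a \in V(k)$ the three factors are smooth and transverse, so by B\'ezout $(H_a)_{\sing}$ is $m^2 + 2m(d-2m)$ reduced points, and Theorem~\ref{T:r isolated singularities} gives $\vmin_\Delta(a) \ge m^2 + 2m(d-2m)$. Minimizing over $1 \le m \le d/2$ yields $2d-3$ (from $m=1$), except when $d=4$, where $m=2$ gives the smaller value $4$. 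The essential difference from your plan is that the nodes of a generic $g_1 g_2 h'$ are \emph{not} required to lie in $(H_{\bar f})_{\sing}$; only $\bar f \in V(k)$ is needed.
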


\begin{proof}
Using Lemma~\ref{L:extension of DVR},
we may reduce to the case in which $k$ is algebraically closed.
\begin{enumerate}[\upshape(a)]
  \item This follows from Corollary~\ref{C:lower bound in terms of dim sing}.
  \item Let $r = \lfloor(d-1)/2\rfloor$.
        Choose distinct points $P_1, \ldots, P_r \in (H_k)_\sing$.
        By Lemma~\ref{L:dense}, there exists $h \in k[x_0,\ldots,x_n]_d$ such that $(H_h)_\sing = \{P_1, \ldots, P_r\}$ as a set.
        By Lemma~\ref{L:vmin of Delta}\eqref{I:vmin of Delta,2},
        $v(\Delta(f)) \ge r$ as claimed.
      \item In the case $n = 2$ of plane curves, $\bar{f} = g^2 h$
for some $g$ of some degree~$m$ with $1 \le m \le d/2$ and $h$ of degree~$d-2m$.
        In Lemma~\ref{L:vmin of Delta}\eqref{I:vmin of Delta,1} we take
        $V$ to be the variety consisting of all forms factoring as $g_1 g_2 h$
        with $\deg g_1 = \deg g_2 = m$ and $\deg h = d-2m$; then $\bar{f} \in V(k)$.
        Let $V' \subset V$ be the dense open subvariety defined by the additional conditions that $g_1,g_2,h$ define smooth curves intersecting transversely.
        By B\'ezout's theorem, if $a \in V'(k)$, then $\#(H_a)_{\sing} = m^2 + 2m(d-2m)$, so $\vmin_\Delta(a) \ge m^2 + 2m(d-2m)$ by Theorem~\ref{T:r isolated singularities}.
        Lemma~\ref{L:vmin of Delta}\eqref{I:vmin of Delta,1} then shows
        that $v(\Delta(f)) \ge m^2 + 2m(d-2m)$.
        The bound in the statement
        is obtained by taking the minimum over $m$ in $[1,d/2]$.
        For $d \neq 4$,
        the minimum is obtained for $m = 1$; when $d = 4$, $m = 2$ gives the
        smaller value.
  \item Let $L$ be a line contained in~$(H_k)_{\sing}$. We will construct an auxiliary polynomial $h \in k[x_0,\ldots,x_n]_d$ such that $(H_h)_{\sing}$ is finite 
    and contains $d-1$ points on~$L$.
        We may assume that $L$ is $x_2 = x_3 = \dots = x_n = 0$.
        Choose distinct $c_1,\ldots,c_{d-1} \in k$.
        Let $g \in k[x_3,\ldots,x_n]_d$ such that $H_g \subset \PP^{n-3}$ is smooth; if $n=2$, then $g=0$.
        Let $h = x_2 \prod_{i=1}^{d-1} (x_1 - c_i x_0) + g(x_3, \ldots, x_n)$.

        Suppose that $Q \in (H_h)_{\sing}$.
        At $Q$, we have $\del h/\del x_2 = 0$ so $\prod_{i=1}^{d-1} (x_1 - c_i x_0) = 0$; also $h=0$, so $g=0$; also, $\del g/\del x_i = 0$ for $i=3,\ldots,n$, but $H_g$ is smooth.
        Thus $x_3=\cdots=x_n=0$ at $Q$, and $Q$ is a singular point on the union of lines $x_2 \prod_{i=1}^{d-1} (x_1 - c_i x_0)=0$ in $\PP^2$,
        hence $(0:0:1)$ or $(1:c_i:0)$ for some $i$.
        Thus $(H_h)_{\sing}$ is finite and contains $d-1$ points on $L$.
        Lemma~\ref{L:vmin of Delta}\eqref{I:vmin of Delta,2} gives $v(\Delta(f)) \ge d-1$.
  \qedhere
\end{enumerate}
\end{proof}

\section*{Acknowledgments}

We thank Parimala and Jean-Pierre Tignol for providing information
about quadratic forms over discrete valuation rings
of residue characteristic~$2$.

\begin{bibdiv}
\begin{biblist}


\bib{Auslander-Buchsbaum1959}{article}{
  author={Auslander, Maurice},
  author={Buchsbaum, D. A.},
  title={Unique factorization in regular local rings},
  journal={Proc. Nat. Acad. Sci. U.S.A.},
  volume={45},
  date={1959},
  pages={733--734},
  issn={0027-8424},
  review={\MR {103906}},
  doi={10.1073/pnas.45.5.733},
}

\bib{Bertapelle-Gonzalez-Aviles2018}{article}{
  author={Bertapelle, Alessandra},
  author={Gonz\'{a}lez-Avil\'{e}s, Cristian D.},
  title={The Greenberg functor revisited},
  journal={Eur. J. Math.},
  volume={4},
  date={2018},
  number={4},
  pages={1340--1389},
  issn={2199-675X},
  review={\MR {3866700}},
  doi={10.1007/s40879-017-0210-0},
}

\bib{Cohen1946}{article}{
   author={Cohen, I. S.},
   title={On the structure and ideal theory of complete local rings},
   journal={Trans. Amer. Math. Soc.},
   volume={59},
   date={1946},
   pages={54--106},
   issn={0002-9947},
   review={\MR{16094}},
   doi={10.2307/1990313},
}

\bib{Demazure2012}{article}{
  author={Demazure, Michel},
  title={R\'esultant, discriminant},
  language={French},
  journal={Enseign. Math. (2)},
  volume={58},
  date={2012},
  number={3-4},
  pages={333--373},
  issn={0013-8584},
  review={\MR {3058604}},
}

\bib{EGA-III.I}{article}{
  author={Grothendieck, A.},
  title={\'El\'ements de g\'eom\'etrie alg\'ebrique. III. \'Etude cohomologique des faisceaux coh\'erents. I},
  journal={Inst. Hautes \'Etudes Sci. Publ. Math.},
  number={11},
  date={1961},
  issn={0073-8301},
  review={\MR {0217085 (36 \#177c)}},
  label={EGA~$\hbox {III}_1$},
  note={Written in collaboration with J.~Dieudonn\'e},
}

\bib{EGA-IV.III}{article}{
  author={Grothendieck, A.},
  title={\'El\'ements de g\'eom\'etrie alg\'ebrique. IV. \'Etude locale des sch\'emas et des morphismes de sch\'emas. III},
  journal={Inst. Hautes \'Etudes Sci. Publ. Math.},
  number={28},
  date={1966},
  issn={0073-8301},
  review={\MR {0217086 (36 \#178)}},
  label={EGA~$\hbox {IV}_3$},
  note={Written in collaboration with J.~Dieudonn\'e},
}

\bib{Eisenbud-Harris2016}{book}{
   author={Eisenbud, David},
   author={Harris, Joe},
   title={3264 and all that---a second course in algebraic geometry},
   publisher={Cambridge University Press, Cambridge},
   date={2016},
   pages={xiv+616},
   isbn={978-1-107-60272-4},
   isbn={978-1-107-01708-5},
   review={\MR{3617981}},
   doi={10.1017/CBO9781139062046},
}

\bib{Gelfand-Kapranov-Zelevinsky2008}{book}{
  author={Gelfand, I. M.},
  author={Kapranov, M. M.},
  author={Zelevinsky, A. V.},
  title={Discriminants, resultants and multidimensional determinants},
  series={Modern Birkh\"auser Classics},
  note={Reprint of the 1994 edition},
  publisher={Birkh\"auser Boston Inc.},
  place={Boston, MA},
  date={2008},
  pages={x+523},
  isbn={978-0-8176-4770-4},
  review={\MR {2394437 (2009a:14065)}},
}

\bib{Greenberg1961}{article}{
  author={Greenberg, Marvin J.},
  title={Schemata over local rings},
  journal={Ann. of Math. (2)},
  volume={73},
  date={1961},
  pages={624--648},
  issn={0003-486X},
  review={\MR {126449}},
  doi={10.2307/1970321},
}

\bib{Greenberg1963}{article}{
  author={Greenberg, Marvin J.},
  title={Schemata over local rings. II},
  journal={Ann. of Math. (2)},
  volume={78},
  date={1963},
  pages={256--266},
  issn={0003-486X},
  review={\MR {156855}},
  doi={10.2307/1970342},
}

\bib{Nicaise-Sebag2008}{article}{
  author={Nicaise, Johannes},
  author={Sebag, Julien},
  title={Motivic Serre invariants and Weil restriction},
  journal={J. Algebra},
  volume={319},
  date={2008},
  number={4},
  pages={1585--1610},
  issn={0021-8693},
  review={\MR {2383059}},
  doi={10.1016/j.jalgebra.2007.11.006},
}

\bib{Saito2012}{article}{
  author={Saito, Takeshi},
  title={The discriminant and the determinant of a hypersurface of even dimension},
  journal={Math. Res. Lett.},
  volume={19},
  date={2012},
  number={4},
  pages={855--871},
  issn={1073-2780},
  review={\MR {3008420}},
  doi={10.4310/MRL.2012.v19.n4.a10},
}

\bib{SerreLocalFields1979}{book}{
  author={Serre, Jean-Pierre},
  title={Local fields},
  series={Graduate Texts in Mathematics},
  volume={67},
  note={Translated from the French by Marvin Jay Greenberg},
  publisher={Springer-Verlag},
  place={New York},
  date={1979},
  pages={viii+241},
  isbn={0-387-90424-7},
  review={\MR {554237 (82e:12016)}},
}

\bib{SGA7.1}{book}{
  title={Groupes de monodromie en g\'eom\'etrie alg\'ebrique. I},
  language={French},
  series={Lecture Notes in Mathematics},
  volume={288},
  note={S\'eminaire de G\'eom\'etrie Alg\'ebrique du Bois-Marie 1967--1969 (SGA~7~I); Dirig\'e par A.~Grothendieck. Avec la collaboration de M.~Raynaud et D. S.~Rim},
  publisher={Springer-Verlag},
  place={Berlin},
  date={1972},
  pages={viii+523},
  review={\MR {0354656 (50 \#7134)}},
  label={SGA~\ensuremath {7_{\text {I}}}},
}

\bib{SilvermanATAEC}{book}{
  author={Silverman, Joseph H.},
  title={Advanced topics in the arithmetic of elliptic curves},
  series={Graduate Texts in Mathematics},
  volume={151},
  publisher={Springer-Verlag},
  place={New York},
  date={1994},
  pages={xiv+525},
  isbn={0-387-94328-5},
  review={\MR { 96b:11074}},
}

\bib{StacksProject}{misc}{
  author={The Stacks Project authors},
  title={Stacks project},
  date={2025-10-15},
  note={Available at \url {http://stacks.math.columbia.edu}\phantom {i}},
  label={SP},
}

\bib{Tate1975}{article}{
  author={Tate, J.},
  title={Algorithm for determining the type of a singular fiber in an elliptic pencil},
  conference={ title={Modular functions of one variable, IV}, address={Proc. Internat. Summer School, Univ. Antwerp, Antwerp}, date={1972}, },
  book={ publisher={Springer, Berlin}, },
  date={1975},
  pages={33--52. Lecture Notes in Math., Vol. 476},
  review={\MR {0393039}},
}

\bib{Verstraete2019}{article}{
  author={Verstraete, Joachim},
  title={Arason's filtration of the Witt group of dyadic valued fields},
  journal={J. Algebra},
  volume={519},
  date={2019},
  pages={190--227},
  issn={0021-8693},
  review={\MR {3876186}},
  doi={10.1016/j.jalgebra.2018.10.026},
}

\end{biblist}
\end{bibdiv}

\end{document}